\documentclass[12pt]{amsart}
\usepackage{graphicx,amssymb,amscd}
\usepackage{enumitem}
\usepackage[usenames]{color}
\usepackage{amsthm,amsfonts,amsmath,amssymb,latexsym,epsfig,mathrsfs,yfonts,marvosym,latexsym,epsfig,tikz}
\usepackage{graphicx,xcolor}
\usepackage{url}
\usepackage[colorlinks=true]{hyperref}
\usepackage[utf8]{inputenc}
\usepackage[T1]{fontenc}
\usepackage[margin=0.835in]{geometry}

\AtEndDocument{\vfill\eject\batchmode}

\DeclareMathAlphabet\oldmathcal{OMS}        {cmsy}{b}{n}
\SetMathAlphabet    \oldmathcal{normal}{OMS}{cmsy}{m}{n}
\DeclareMathAlphabet\oldmathbcal{OMS}       {cmsy}{b}{n}
\usepackage[active]{srcltx}
\usepackage{eucal}

\newtheorem{theorem}{Theorem}[section]
\newtheorem{lemma}[theorem]{Lemma}
\newtheorem{proposition}[theorem]{Proposition}

\newtheorem{question}[theorem]{Question}
\newtheorem{def/prop}[theorem]{Definition/Proposition}

\theoremstyle{definition}
\newtheorem{definition}[theorem]{Definition}
\newtheorem{remark}[theorem]{Remark}
\newtheorem*{ack}{Acknowledgements}
\newtheorem{example}{Example}[section]

\DeclareSymbolFont{bbold}{U}{bbold}{m}{n}
\DeclareSymbolFontAlphabet{\mathbbold}{bbold}

\def\BOne{\mathchoice{\scalebox{1.16}{$\displaystyle\mathbbold 1$}}{\scalebox{1.16}{$\textstyle\mathbbold 1$}}{\scalebox{1.16}{$\scriptstyle\mathbbold 1$}}{\scalebox{1.16}{$\scriptscriptstyle\mathbbold 1$}}}
\def\fract#1#2{\raise4pt\hbox{$ #1 \atop #2 $}}

\def\bbc{{\mathbb C}}

\def\bbf{{\mathbb F}}

\def\bbp{{\mathbb P}}
\def\bbq{{\mathbb Q}}
\def\bbr{{\mathbb R}}
\def\bbs{{\mathbb S}}
\def\bbt{{\mathbb T}}

\def\bbz{{\mathbb Z}}

\def\gri{\iota}
\def\grk{\kappa}

\def\grD{\Delta}

\def\grS{\Sigma}

\def\ell{{\bf l}}

\def\bfw{{\bf w}}

\def\cala{{\mathcal A}}

\def\cald{{\mathcal D}}
\def\cale{{\mathcal E}}
\def\calf{{\mathcal F}}

\def\cali{{\mathcal I}}

\def\call{{\mathcal L}}
\def\calm{{\mathcal M}}

\def\calo{{\mathcal O}}

\def\calr{{\mathcal R}}
\def\cals{{\oldmathcal S}}

\def\calw{{\mathcal W}}

\def\calS{{\mathcal S}}

\def\gg{{\mathfrak g}}
\def\gh{{\mathfrak h}}

\def\gt{{\mathfrak t}}

\def\gz{{\mathfrak z}}

\def\gC{{\mathfrak C}}

\def\gR{{\mathfrak R}}

\def\<{\langle}
\def\>{\rangle}
\def\ra#1{\to}

\newcommand{\x}{r}

\def\fract#1#2{\raise4pt\hbox{$ #1 \atop #2 $}}
\def\decdnar#1{\phantom{\hbox{$\scriptstyle{#1}$}}
\left\downarrow\vbox{\vskip15pt\hbox{$\scriptstyle{#1}$}}\right.}

\def\lra{\longrightarrow}

\def\hook{\mathbin{\hbox to 6pt{%
                 \vrule height0.4pt width5pt depth0pt
                 \kern-.4pt
                 \vrule height6pt width0.4pt depth0pt\hss}}}

\begin{document}

\title{Robustness of CSC Sasaki existence under the join operation}

\author[Charles Boyer]{Charles P. Boyer}
\address{Charles P. Boyer, Department of Mathematics and Statistics,
University of New Mexico, Albuquerque, NM 87131.}
\email{cboyer@unm.edu} 
\author[Christina T{\o}nnesen-Friedman]{Christina W. T{\o}nnesen-Friedman}
\address{Christina W. T{\o}nnesen-Friedman, Department of Mathematics, Union
College, Schenectady, New York 12308, USA }
\email{tonnesec@union.edu}
\thanks{The first author was partially supported by a grant from the
Simons Foundation (\#519432).}
\date{\today}

\begin{abstract}
The main purpose of this work is to explore the existence of constant scalar curvature Sasaki metrics in the Sasaki cone of the join of two regular Sasaki manifolds, $M_1$ and $M_2$.
Furthermore, we consider some cases of continuous families of extremal Sasaki twins.

\end{abstract}

\maketitle
\vspace{-7mm}


\tableofcontents

\section*{Introduction}\label{intro}
The Sasaki join construction is the Sasaki analog of the product of K\"ahler structures on the base.
Let $M_1$ and $M_2$ be quasiregular Sasaki manifolds and consider the join $M_1\star_{l_1,l_2} M_2$ as defined in diagram \eqref{s2comdia} below. In \cite{BoTo19a} we describe the Sasaki join when one of the manifolds, say $M_2$, is the weighted 3-sphere $S^3_\bfw$.   The special case where $M_1$ is a circle bundle over a Riemann surface is treated in \cite{BoTo13,CaSt25}. We are interested in seeing how much information (topologically and geometrically) we can obtain in other cases by explicitly presenting examples. 

A reasonable question (Question \ref{generalquestion}) one might ask when considering a join $M_1\star_{l_1,l_2} M_2$ is the following: {\it Does the Sasaki cone of a join of two arbitrary Sasaki manifolds that each admit a cscS-ray (constant scalar curvature Sasaki-ray) somewhere in their 
individual Sasaki cones always have a (potentially irregular) cscS-ray?} In Section \ref{main}, we shall see that the general answer to this question is ``no,'' but we will also see many cases of $M_1$ and $M_2$ where 
$M_1\star_{l_1,l_2} M_2$  will have a cscS-ray. 

Related to the question above we could also ask the following: {\it Suppose $M_1$ or $M_2$ does NOT admit a cscS-ray. Might $M_1\star_{l_1,l_2} M_2$ still admit a cscS-ray?} 
In Example \ref{resurrection} we will see that this is indeed possible. An ample supply of examples showing this possibility also follows from Theorem B in the recent work \cite{ApLaPa26}.
Thus, we can both loose and gain cscS existence when we perform the join operation.

We begin with a brief introduction to Sasaki geometry in Section \ref{sasreview} and then continue in Section \ref{genjoin} with an introduction to the Sasaki join.
After this we have a transition section (Section \ref{transition}), where we explain how to (implicitly) study the existence of extremal and constant scalar curvature Sasaki metrics on certain joins by working with positive Killing potentials on 
product K\"ahler manifolds. This technique is due to the work by Apostolov and Calderbank \cite{ApCa18}.

In Sections \ref{main} and \ref{twins} we will (implicitly) consider various examples of joins $M_1\star_{l_1,l_2} M_2$ between two smooth Sasaki manifolds with focus on the 
$\gt^+(M_2)$ part of the Sasaki cone $\gt^+(M_1\star_\ell M_2)= (\gt^+(M_1)\oplus \gt^+(M_2))/\{L_\ell\}$ (see Equation \ref{t+add2}).
If $\dim \gt^+(M_1)=1$, this comprises all of $\gt^+(M_1\star_\ell M_2)$, but in general it is just a proper sub cone of the full unreduced Sasaki cone of the join.

\begin{itemize}
\item $M_1=S^3$ (which is an $S^1$-bundle over $\bbc\bbp^1$, so $M_1$ is regular) and $M_2$ is an $S^1$-bundle over the $k^{th}$ Hirzebruch surface $\bbf_k= \bbp(\calo \oplus \calo(k))\rightarrow \bbc\bbp^1$, which we take to be simply connected and again is regular.
By Proposition \ref{affirmative}, we have that any such join has at least one cscS ray in its Sasaki cone and in certain cases there will be at least three such rays. Topologically, $M_2$ is homeomorphic to $S^2\times S^3$ when $k$ is even and homeomorphic to the nontrivial $S^3$ bundle over $S^2$ when $k$ is odd. From this and Equation \eqref{lexacthom1} below we see that for all relatively prime pairs $\ell=(l_1,l_2)$ the join $S^3\star_\ell M_2$ is simply connected with $\pi_2(S^3\star_\ell M_2)=\bbz^2$.

\item In a similar vein $M_1$ is the compact Heisenberg manifold $\bbr^3/\Gamma$ , which can be represented as a nontrivial $S^1$-bundle over $T^2$ and $M_2$ is still a primitive $S^1$-bundle over the $k^{th}$ Hirzebruch surface $\bbf_k= \bbp(\calo \oplus \calo(k))\rightarrow \bbc\bbp^1$. Here $\Gamma$ is the integral Heisenberg group.
So $\pi_1(M_1)=\Gamma$ which is a nonsplit extension of $\bbz^2$ by $\bbz$, and $\pi_2(M_1)=0$, whereas $\pi_1(M_2)=\BOne$ and $\pi_2(M_2)=\bbz$.  By Proposition \ref{affirmative2} we have that any such join has at least one cscS ray in its Sasaki cone. 
\item $M_1=S^3$ and $M_2$ is some $S^1$-bundle over a ruled surface $\bbs_k$ of the form $\bbs_k=\bbp(\calo \oplus L_k)\rightarrow \Sigma_{\gg}$, 
where $\Sigma_{\gg}$ is any compact Riemann surface of genus $\gg>0$ and $L_k$ is a holomorphic line bundle of degree $k$ ($\int_{\Sigma_{\gg}} c_1(L_k)=k$).
While $\gt^+(M_1\star_\ell M_2)$ is not necessarily exhausted by extremal Sasaki rays in this case (See Part (2) of Example \ref{quasiregularex}), Proposition \ref{affirmative3} tells us that also in this case 
the Sasaki cone will contain at least one cscS ray.
\item $M_1$ is some $S^1$-bundle over $\Sigma_{\gg_1}$, where $\Sigma_{\gg_1}$ is a compact Riemann surface of genus $\gg_1>1$ and $M_2$ is an $S^1$-bundle over a ruled surface $\bbs_k$ of the form 
$\bbs_k=\bbp(\calo \oplus L_k)\rightarrow \Sigma_{\gg_2}$ where $\Sigma_{\gg_2}$ is a compact Riemann surface of genus $\gg_2>1$ and $L_k$ is a holomorphic line bundle of degree $k$.
From Examples \ref{nocscex} and \ref{endonposnote}, we discover sub-cases of this, where  $\gt^+(M_1\star_\ell M_2)$ has no extremal Sasaki rays whatsoever, as well as sub-cases where there is at least one cscS ray in the Sasaki cone.
The sub-cases are determined by the choices made of $\gg_2$ and $k$ as well as a polarization of $\Sigma_{\gg_1}\times \bbs_k$, which in turn determines the values of $l_1$, and $l_2$.
In Example \ref{resurrection} we will then iterate the regular join process by letting the regular Sasaki manifold resulting from the join in Example \ref{nocscex} play the role of one of the regular Sasaki manifolds in a new join with $S^3$. 
In other words we are considering $S^3 \star_{\tilde{l_1},\tilde{l_2}}\left(M_1\star_{l_1,l_2} M_2\right)$. 
\item $M_1$ is some $S^1$-bundle over $\bbc\bbp^2\#q\overline{\bbc\bbp}^2$,
where $4 \leq q \leq 8$ ($\bbc\bbp^2$ blown up at
$q$ generic points), and $M_2$ is an $S^1$-bundle over a ruled surface $\bbs_k$ of the form 
$\bbs_k=\bbp(\calo \oplus L_k)\rightarrow \Sigma_{\gg_2}$ where $\Sigma_{\gg_2}$ is a compact Riemann surface of genus $\gg_2>1$ and $L_k$ is a holomorphic line bundle of degree $k$.
For a particular choice of $\gg_2$ and $k$, Example \ref{cscmoatseparated} exhibits a choice of polarization of $\Sigma_{\gg_1}\times \bbs_k$ (i.e. a choice of $l_1$ and $l_2$) where the
Sasaki cone is exhausted by extremal Sasaki rays, of which three of them are cscS, as well as a choice of polarization of $\Sigma_{\gg_1}\times \bbs_k$ (i.e. a choice of $l_1$ and $l_2$) where
the Sasaki cone has two cscS rays which are in separate connected components of the set of extremal Sasaki rays in the Sasaki cone, The two connected components are separated by a ``moat'' in the Sasaki cone 
that has no extremal Sasaki metrics. See Figure \ref{moat}.
\item $M_1$ is some $S^1$-bundle over a compact Hodge manifold of negative scalar curvature and $M_2$ is the standard Sasaki manifold $S^{2n+1} \rightarrow \bbc\bbp^n$.
For a very special choice of $(l_1,l_2)$, Proposition \ref{manytwins} yields a case where there is a continuous family (sometimes exhaustive) of so-called extremal Sasaki twins in the Sasaki cone.
 \emph{Extremal Sasaki twins}  on a pseudo-convex CR structure $(\cald,J)$ of Sasaki type are extremal Sasaki structures all compatible with $(\cald,J)$ and having commuting non-colinear Sasaki-Reeb vector fields. 
 Usually these come in pairs (therefore the term ``twins''), but occasionally we get a continuous family. A well-known case of this is when the Sasaki manifold is $S^{2n+1}$ over $\bbc\bbp^n$ 
 and more generally for a regular Sasaki manifold over a Bochner flat K\"ahler manifold
 \cite{Bry01,ApCaGa06,ApCa18}. Proposition \ref{manytwins} includes cases where the regular transverse K\"ahler structure in not necessarily Bocher flat.
See Remark \ref{polarization} for more discussion about this case. 
\end{itemize}

\begin{ack}
The authors would like to thank Vestislav Apostolov for helpful conversations concerning the material in Section \ref{otherweights}.
\end{ack}

\section{Brief Review of Sasaki Geometry}\label{sasreview}

Recall that a Sasakian structure on a contact manifold $M^{2n+1}$ of dimension $2n+1$ is a special type of contact metric structure $\cals=(\xi,\eta,\Phi,g)$ with underlying almost CR structure $(\cald,J)$ where $\eta$ is a contact form such that $\cald=\ker\eta$, $\xi$ is its Reeb vector field, $J=\Phi |_\cald$, and $g=d\eta\circ (\BOne \times\Phi) +\eta\otimes\eta$ is a Riemannian metric. $\cals$ is a Sasakian structure if $\xi$ is a Killing vector field and the almost CR structure is integrable, i.e. $(\cald,J)$ is a CR structure. We refer to \cite{BG05} for the fundamentals of Sasaki geometry. We call $(\cald,J)$ a {\it CR structure of Sasaki type}, and $\cald$ a {\it contact structure of Sasaki type}. For convenience we shall always assume that the Sasaki manifolds $M^{2n+1}$ are compact and connected unless explicitly stated otherwise.

Given a Sasaki manifold $M$, if the Reeb vector field $\xi$ generates a locally free circle action, the quotient will be a projective algebraic orbifold which we write as the pair $(N,\grD_N)$ where $\grD_N$ is a sum of irreducible branch divisors, viz. 
$$\grD_N= \sum_{j=1}^k\bigl(1-\frac{1}{m_j}\bigr)D_j$$
where $k=\dim {\rm Div}(N)$, $m_j$ are the ramification indices, and $D_j\in {\rm Div}(N)$ the group of Weil divisors on $N$.

Within a fixed contact CR structure $(\cald,J)$ there is a conical family of Sasakian structures known as the Sasaki cone. We are also interested in a variation within this family. To describe the Sasaki cone we fix a Sasakian structure $\cals_o=(\xi_o,\eta_o,\Phi_o,g_o)$ on $M$ whose underlying CR structure is $(\cald,J)$ and let $\gt$ denote the Lie algebra of a maximal torus $\bbt$ in the automorphism group of $\cals_o$. The {\it (unreduced) Sasaki cone} \cite{BGS06} is defined by
\begin{equation}\label{sascone}
\gt^+(\cald,J)=\{\xi\in\gt~|~\eta_o(\xi)>0~\text{everywhere on $M$}\},
\end{equation}
which is a cone of dimension $k\geq 1$ in $\gt$. The reduced Sasaki cone $\grk(\cald,J)$ is $\gt^+(\cald,J)/\calw$ where $\calw$ is the Weyl group of the maximal compact subgroup of the CR automorphism group $\gC\gR(\cald,J)$. Indeed, $\grk(\cald,J)$ is the moduli space of Sasakian structures with underlying CR structure $(\cald,J)$. However, it is more convenient to work with the unreduced Sasaki cone $\gt^+(\cald,J)$. It is also clear from the definition that $\gt^+(\cald,J)$ is a cone under the transverse scaling defined by
\begin{equation}\label{transscale}
\cals=(\xi,\eta,\Phi,g)\mapsto \cals_a=(a^{-1}\xi,a\eta,g_a),\quad g_a=ag+(a^2-a)\eta\otimes\eta, \quad a\in\bbr^+
\end{equation}
So $\gt^+(\cald,J)$ is a cone and since the Reeb vector field $\xi$ is a Killing vector field, we have $\dim\gt^+(\cald,J)\geq 1$. Moreover, it follows from contact geometry that $\dim\gt^+(\cald,J)\leq n+1$. When $\dim\gt^+(\cald,J)=n+1$ we have a toric contact manifold of Reeb type studied in \cite{BM93,BG00b,Ler02a,Ler04,Leg10,Leg16}. In this case there is a strong connection between the geometry and topology of $(M,\cals)$ and the combinatorics of $\gt^+(\cald,J)$. Much can also be said in the complexity 1 case ($\dim\gt^+(\cald,J)=n$) \cite{AlHa06}.
 
We often have need to deform the contact structure $\cald\mapsto \cald_\varphi$ by a contact isotopy $\eta\mapsto \eta +d^c\varphi$ where $\varphi\in C^\infty(M)^\bbt$ is a smooth function invariant under the torus $\bbt$. We note that the Sasaki cone $\gt^+(\cald,J)$ is invariant under such contact isotopies, that is, $\gt^+(\cald_\varphi,J_\varphi)= \gt^+(\cald,J)$. For each $\varphi\in C^\infty(M)^\bbt$, $\cald_\varphi\longrightarrow TM$ gives a splitting of the exact sequence
$$0\longrightarrow L_{\xi_o}\longrightarrow TM\longrightarrow Q\longrightarrow 0$$
with $J_\varphi=\Phi |_{{\cald}_{\varphi}}$.  Furthermore, each choice of Reeb vector field $\xi\in\gt^+(\cald,J)$ gives rise to an infinite dimensional contractible space $\calS(M,\xi)$ of Sasakian structures \cite{BG05}. We shall often make such a choice $\cals=(\xi,\eta,\Phi,g)\in\calS(M,\xi)$ and identify it with the element $\xi\in\gt^+(\cald,J)$.

\begin{definition}\label{sewz.1}We denote by $\cals\calo$ the groupoid whose object set is the set of
compact quasi-regular Sasakian orbifolds and whose morphisms are orbifold diffeomorphisms, by $\cals\calm$ the
subgroupoid of $\cals\calo$ that are smooth manifolds and smooth diffeomorphisms, and by $\calr\subset
\cals\calm$ the subset of compact, simply connected, regular Sasakian manifolds.
\end{definition}

The groupoids $\cals\calo$ and $\cals\calm$ are graded by dimension
$$\cals\calo =\bigoplus_{n=0}^\infty \cals\calo_{2n+1}, \qquad \qquad  \cals\calm =\bigoplus_{n=0}^\infty \cals\calm_{2n+1}$$ 
and for each pair of relatively prime positive integers $(l_1,l_2)$, we can define a join called the $\ell$-{\it join} which gives a graded multiplication
\begin{equation}\label{joinmaps}
\star_{l_1,l_2}:\cals\calm_{2n_1+1}\times \cals\calm_{2n_2+1}\ra{1.5} \cals\calo_{2(n_1+n_2)+1}
\end{equation}
which we now describe.

\section{The General Join Construction}\label{genjoin}
The join construction is the Sasaki analogue for products in K\"ahler geometry. It was first described in the context of Sasaki-Einstein manifolds \cite{BG00a}, but then developed more generally in \cite{BGO06}, see also Section 7.6.2. of \cite{BG05}. Its relation to the de Rham decomposition Theorem and reducibility questions \cite{HeSu12b} were studied in \cite{BHLT16}. Joins of the form $M\star_\ell S^3_\bfw$ were considered in full generality in \cite{BoTo19a}. Here we briefly recall the general join $M_1\star_{l_1,l_2} M_2$ from \cite{BGO06} and \cite{BG05} where $M_i$ are smooth quasiregular Sasaki manifolds. Generally,  the join $M_1\star_{l_1,l_2} M_2$ will be an orbifold.  Lemma \ref{joinorderlem} below gives the conditions for $M_1\star_{l_1,l_2} M_2$ to be a smooth manifold. Given two manifolds $M_1,M_2$ with locally free circle actions $\cala_1,\cala_2$, respectively, and a pair of relatively prime positive integers $\ell=(l_1,l_2)$ we define the $\ell$-join (or just join) of $M_1$ and $M_2$ as the quotient $M_{\ell}=M_1\star_{l_1,l_2} M_2$ of the fibration
\begin{equation}\label{Sjoineqn}
S^1\longrightarrow M_1\times M_2\longrightarrow M_1\star_{l_1,l_2} M_2
\end{equation}
where the $S^1$ action is given by 
\begin{equation}\label{S1action}
(x_1;x_2)\mapsto (\cala_1(l_2\theta,x_1);\cala_2(-l_1\theta,x_2)).
\end{equation}
The join $M_{\ell}=M_1\star_{l_1,l_2} M_2$ is then constructed from the following  commutative diagram
\begin{equation}\label{s2comdia}
\begin{matrix}  M_1\times M_2 &&& \\
                          &\searrow\pi_L && \\
                          \decdnar{\pi_{1}\times\pi_{2}} && M_1\star_{l_1,l_2} M_2 &\\
                          &\swarrow\pi && \\
                           (N_1,\grD_{N_1})\times(N_2,\grD_{N_2}) &&& 
\end{matrix} 
\end{equation}
where the $\pi_L$ is the projection generated by the circle action \eqref{S1action} which in turn is generated by the vector field 
\begin{equation}\label{Lvec}
L_\ell =\frac{1}{2l_1}\xi_{M_1}-\frac{1}{2l_2}\xi_{M_2}, 
\end{equation}
where $\xi_{M_i}$ is the Reeb vector field of the Sasaki manifold $M_i$ with contact form $\eta_{M_i}$. Note that the southwest arrow $\pi$ in diagram \eqref{s2comdia} is just the Boothby-Wang bundle over the product.
The quotient manifold $M_1\star_{l_1,l_2} M_2$ has a naturally induced quasi-regular Sasakian structure with contact 1-form 
\begin{equation}\label{conform}
\eta_\ell =l_1\eta_{M_1}+l_2\eta_{M_2}
\end{equation} 
and Reeb vector field
\begin{equation}\label{Reebjoin}
\xi_{\ell}=\frac{1}{2l_1}\xi_{M_1}+\frac{1}{2l_2}\xi_{M_2}.
\end{equation}

On the join $M_{\ell}=M_1\star_\ell M_2$ there is a direct sum decomposition of the tangent bundle \cite{HeSu12b, BHLT16}
\begin{equation}\label{tbunsplit}
TM_{\ell}=\cald_1\oplus \cald_2\oplus \{{\xi_\ell}\}
\end{equation}
which for $i=1,2$ gives rise to two foliations $\cale_i=\cald_i\oplus \{\xi_\ell\}$ of $M_{\ell}$ whose leaves are totally geodesic Sasaki submanifolds isomorphic, up to transverse scaling, as Sasakian structures to $M_1$ and $M_2$, respectively. Note that the intersection $\cale_1\cap \cale_2$ is just $\calf_{\xi_\ell}$, the foliation generated by the Reeb vector field $\xi_\ell$, and both the transverse metric $g^T$ and the contact bundle $\cald_{\ell}=\ker\eta_{\ell}$ split as direct sums. The natural numbers $l_1,l_2$ are generally contact invariants. Since the CR-structure $(\cald_{\ell},J)$ is the horizontal lift of  the complex structure on $N_1\times N_2$, this splits as well.  

In the following we are going to present some interesting cases involving the existence of constant scalar curvature Sasaki metrics or more generally extremal Sasaki metrics in the Sasaki cone of certain joins between two smooth regular Sasaki manifolds; hence, we have a special interest in the regular case when \eqref{s2comdia} simplifies to
\begin{equation}\label{s2comdiasmooth}
\begin{matrix}  M_1\times M_2 &&& \\
                          &\searrow\pi_L && \\
                          \decdnar{\pi_{1}\times\pi_{2}} && M_1\star_{l_1,l_2} M_2 &\\
                          &\swarrow\pi && \\
                          N_1\times N_2 &&& 
\end{matrix} 
\end{equation}
and $\pi: M_1\star_{l_1,l_2} M_2  \rightarrow N_1\times N_2$ is a regular Boothby-Wang construction. In this case, as seen later, we will choose to explore the Sasaki cone via positive Killing potentials on the
K\"ahler manifold $N= N_1\times N_2$.

We are interested in the case when the image of the map \eqref{joinmaps} is a smooth manifold. Recall Definition 7.1.1 in \cite{BG05} that the order $\Upsilon$ of a quasiregular Sasaki manifold is the lcm of the orders of the leaf holonomy groups of the characteristic foliation. Moreover, $\Upsilon$ coincides with the lcm of the order of the local uniformizing groups of the quotient orbifold $N$.

From Proposition 7.6.6 in \cite{BG05} we have

\begin{lemma}\label{joinorderlem}
Let $M_i$ be smooth quasiregular Sasaki manifolds of order $\Upsilon_i$. Then the $\ell$-join $M_{\ell}=M_1\star_\ell M_2$ is smooth if and only if $\gcd(\Upsilon_1l_2,\Upsilon_2l_1)=1$ in which case the order of $M_\ell$ is $\Upsilon_1\Upsilon_2$. In particular, if the Sasakian structures on both $M_1$ and $M_2$ are regular, the join will be smooth for all pairs of relatively prime integers $l_1$ and $l_2$.
\end{lemma}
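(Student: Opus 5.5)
The plan is to decide smoothness of $M_\ell=(M_1\times M_2)/S^1$ by checking that the $S^1$-action \eqref{S1action} is free. Since $M_1\times M_2$ is smooth and the action is locally free (the generator $L_\ell$ of \eqref{Lvec} never vanishes, because $\xi_{M_1}$ and $\xi_{M_2}$ are nowhere zero and tangent to different factors), the quotient is a smooth manifold exactly when every isotropy group is trivial. The orbifold structure of $M_\ell$ comes entirely from these finite isotropy groups.

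First I compute the isotropy groups. Take a point $(x_1,x_2)$, and let $x_i$ lie on a Reeb orbit of $M_i$ whose isotropy under the Reeb circle action $\cala_i$ is $\bbz_{m_i}$, where $m_i\mid \Upsilon_i$. An element $\theta\in S^1=\bbr/\bbz$ fixes $(x_1,x_2)$ if and only if both
\[
l_2\theta\in \tfrac{1}{m_1}\bbz \qquad\text{and}\qquad l_1\theta\in\tfrac1{m_2}\bbz .
\]
Write $\theta=a/N$. The set of such $\theta$ is the cyclic group $\tfrac{1}{l_2m_1}\bbz\cap\tfrac1{l_1m_2}\bbz$ modulo $\bbz$. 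Its order is $\gcd(l_2m_1,l_1m_2)$: an element $t/(l_2m_1)$ lies in $\tfrac1{l_1m_2}\bbz$ exactly when $l_2m_1$ divides $t\,l_1m_2$. So the isotropy at $(x_1,x_2)$ is $\bbz_{\gcd(l_2m_1,\,l_1m_2)}$.

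Next I run over all points. As $(x_1,x_2)$ varies, every pair $(m_1,m_2)$ of leaf-holonomy orders occurs. Because $\Upsilon_i$ is the lcm of the $m_i$, the join is smooth if and only if $\gcd(l_2m_1,l_1m_2)=1$ for every such pair.
\begin{itemize}
\item If $\gcd(\Upsilon_1l_2,\Upsilon_2l_1)=1$, then each $\gcd(l_2m_1,l_1m_2)$ divides it, so every isotropy group is trivial.
\item Conversely, suppose a prime $p$ divides $\gcd(\Upsilon_1l_2,\Upsilon_2l_1)$. Since $\gcd(l_1,l_2)=1$, $p$ must divide one of the products $l_2m_1$, $l_1m_2$ through a factor $m_i$ of a single leaf. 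I then pick an $m_1$ and an $m_2$, each attaining the relevant $p$-power of $\Upsilon_i$ (the lcm is attained prime-by-prime by some leaf), and this pair produces nontrivial isotropy.
\end{itemize}
I expect this converse to be the main obstacle: the maximal $p$-powers sit on specific leaves, and I need to check that $p\mid l_2m_1$ and $p\mid l_1m_2$ simultaneously for one pair of points. This works because the points $x_1$ and $x_2$ can be chosen independently in the product.

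Finally, for the order of $M_\ell$, I identify the leaf holonomy of $\xi_\ell$ on $M_\ell$ with the local uniformizing groups of $N_1\times N_2$, namely $\Gamma_1\times\Gamma_2$, using diagram \eqref{s2comdia}. The lcm of $|\Gamma_1\times\Gamma_2|$ over all points is then $\Upsilon_1\Upsilon_2$, since the products are cyclic of coprime order under the smoothness condition. Alternatively I would quote Proposition 7.6.6 of \cite{BG05} for this step. The regular case is immediate: there $\Upsilon_1=\Upsilon_2=1$, so the condition reduces to $\gcd(l_2,l_1)=1$.
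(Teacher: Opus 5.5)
Your argument is correct. It also does more than the paper, which gives no proof and simply imports the statement from Proposition 7.6.6 of Boyer--Galicki's book. You instead verify it directly: you compute the isotropy of the $S^1_L$-action at $(x_1,x_2)$ to be $\bbz_{\gcd(l_2m_1,\,l_1m_2)}$ and use that $x_1$ and $x_2$ can be chosen independently. The citation buys brevity. Your computation is self-contained and additionally identifies the local uniformizing groups of the join orbifold when the condition fails.

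Three points could be tightened.
\begin{enumerate}
\item In the converse you only need a leaf with $p\mid m_i$, not the full $p$-power. If $p\mid l_2$, then $p\nmid l_1$ forces $p\mid\Upsilon_2$, and any $m_1$ works; the case $p\mid l_1$ is symmetric. If $p\nmid l_1l_2$, you pick leaves with $p\mid m_1$ and $p\mid m_2$ independently.
\item In the order step, the identification with $\Gamma_1\times\Gamma_2$ cannot be read off the diagram alone. Let $T^2$ be the torus generated by $\xi_{M_1},\xi_{M_2}$. The Reeb circle of $M_\ell$ is $T^2/S^1_L$, and its isotropy at $[x_1,x_2]$ is $\mathrm{Stab}_{T^2}(x_1,x_2)/\bigl(\mathrm{Stab}_{T^2}(x_1,x_2)\cap S^1_L\bigr)$. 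The denominator is the $S^1_L$-isotropy, which is trivial by smoothness. This leaves $\bbz_{m_1}\times\bbz_{m_2}$, which is cyclic because $\gcd(m_1,m_2)$ divides $\gcd(l_2m_1,l_1m_2)=1$.
\item The equality $\mathrm{lcm}(m_1m_2)=\Upsilon_1\Upsilon_2$ comes from the independence of the two choices, not from coprimality. For each prime $p$, the maximum of $v_p(m_1)+v_p(m_2)$ is $v_p(\Upsilon_1)+v_p(\Upsilon_2)$, where $v_p$ is the $p$-adic valuation. Coprimality is what is needed for these groups to sit inside $S^1$.
\end{enumerate}
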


It is the latter case that concerns us most in this paper. In any case we easily obtain some topological information.
From the fibration \eqref{Sjoineqn}, we obtain the long exact homotopy sequence 
\begin{equation}\label{lexacthom1}
0\lra \pi_2(M_1)\oplus \pi_2(M_2)\lra \pi_2(M_1\star_\ell M_2)\fract{g}{\lra} \bbz\fract{f}{\lra} \pi_1(M_1)\oplus \pi_1(M_2)\lra \pi_1(M_1\star_\ell M_2)\lra \BOne
\end{equation}
and for $n>2$  
\begin{equation}\label{lexacthom2}
\pi_n(M_1\star_\ell M_2)=\pi_n(M_1)\oplus \pi_n(M_2).
\end{equation}
In particular, equation \eqref{lexacthom1} implies that $\ker{f}\approx {\rm im}(g)\approx k\bbz$ for some $k\in \bbz$.
A particular case of interest is 
\begin{example}\label{ex1}
Let $M_1=S^{2k+1}$ for $k\in \bbz^{+}$ and $M_2$ be the total space of an $S^1$ bundle over a compact Riemann surface $\grS_\gg$ of genus $\gg>0$. Then $\pi_2(M_1)=\pi_2(M_2)=\pi_1(M_1)=0$ and $\pi_1(M_2)$ is a nonsplit extension of $\pi_1(\grS_\gg)$ by $\bbz$. So $\pi_2(M_1\star_\ell M_2)\approx \bbz$ and the map $g$ in \eqref{lexacthom1} is multiplication by some $j\in \bbz$. So \eqref{lexacthom1} becomes
$$
\begin{CD}
 @. @. @. 0 \\
 @. @. @.  @VVV \\
 @. @. @. \bbz  \\
 @. @. @.  @VVV \\
 0 @>>>\pi_2(S^{2k+1}\star_\ell M_2)@>\times j>> \bbz @>f>> \pi_1(M_2)@>>> \pi_1(S^{2k+1}\star_\ell M_2) @>>> \BOne. \\
 @. @. @.  @VVV \\
 @. @. @. \pi_1(\grS_\gg) \\
@. @. @.  @VVV \\
 @. @. @. \BOne
\end{CD} 
$$
Note that $f$ defines an injection of $\bbz/j\bbz$ into $\pi_1(M_2)$ giving the short exact sequence
\begin{equation}\label{shexseq}
0\lra \bbz/j\bbz \lra \pi_1(M_2) \lra \pi_1(S^{2k+1}\star_\ell M_2)\lra \BOne.
\end{equation}
\end{example}

\subsection{The Sasaki Cone $\gt^+(M_1\star_\ell M_2)$ of the Join}
We begin by describing a maximal Abelian Lie algebra of the join $M_1\star_\ell M_2$. The Lie algebra $\gt_M$ of a quasi-regular Sasaki manifold $(M,\cals)$ can be written as $\gt_M=\{\xi_M\} +\gh_M$ where $\gh_M$ is the horizontal lift to $M$ of a maximal commutative Lie algebra of Hamiltonian vector fields on $N$. On $M_1\times M_2$ we have the Lie algebra $\gt_{M_1}\oplus \gt_{M_2}$ which induces the Lie algebra isomorphism
\begin{equation}\label{t+add}
\gt(M_1\star_\ell M_2)\approx (\gt_{M_1}\oplus \gt_{M_2})/\{L_{\ell}\}
\end{equation}
on $M_1\star_\ell M_2$ where $\{L_{\ell}\}$ denotes the one dimensional ideal generated by the vector field $L_{\ell}$ of Equation \eqref{Lvec}. It then follows from \eqref{Lvec} and \eqref{Reebjoin} that a maximal Abelian Lie algebra $\gt(M_1\star_\ell M_2)$ of the join satisfies

\begin{lemma}\label{torjoinlem}
There is an isomorphism of Abelian Lie algebras
$$\gt(M_1\star_\ell M_2)\approx \{\xi_{\ell}\} + \gh_{M_1} +\gh_{M_2}$$
\end{lemma}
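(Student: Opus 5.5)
Starting from the isomorphism \eqref{t+add}, $\gt(M_1\star_\ell M_2)\approx (\gt_{M_1}\oplus \gt_{M_2})/\{L_\ell\}$, I substitute the decompositions $\gt_{M_i}=\{\xi_{M_i}\}+\gh_{M_i}$. This gives
$$\gt_{M_1}\oplus\gt_{M_2}=\{\xi_{M_1}\}+\{\xi_{M_2}\}+\gh_{M_1}+\gh_{M_2}$$
as a vector space direct sum. The two Reeb vector fields $\xi_{M_1},\xi_{M_2}$ are linearly independent on $M_1\times M_2$, since they are tangent to different factors. By \eqref{Lvec} and \eqref{Reebjoin}, $L_\ell$ and $\xi_\ell$ are obtained from $\xi_{M_1},\xi_{M_2}$ by the matrix $\frac12\begin{pmatrix}1/l_1 & -1/l_2\\ 1/l_1 & 1/l_2\end{pmatrix}$. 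Its determinant is $\frac{1}{2l_1l_2}\neq 0$, so $\{\xi_{M_1}\}+\{\xi_{M_2}\}=\{L_\ell\}+\{\xi_\ell\}$. Hence
$$\gt_{M_1}\oplus\gt_{M_2}=\{L_\ell\}\oplus\{\xi_\ell\}\oplus\gh_{M_1}\oplus\gh_{M_2},$$
and dividing by the ideal $\{L_\ell\}$ yields the claimed linear isomorphism $\gt(M_1\star_\ell M_2)\approx\{\xi_\ell\}+\gh_{M_1}+\gh_{M_2}$.

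Next I check that this is an isomorphism of Abelian Lie algebras. Both sides are abelian, so all that is needed is that the quotient map restricted to $\{\xi_\ell\}+\gh_{M_1}+\gh_{M_2}$ is a Lie algebra homomorphism. Three facts give this. First, $\gt_{M_1}\oplus\gt_{M_2}$ is abelian, because vector fields on different factors of $M_1\times M_2$ commute and each $\gt_{M_i}$ is abelian. Second, $\{L_\ell\}$ is central, so the quotient is a Lie algebra. Third, every element of $\gh_{M_i}$ commutes with $L_\ell$, because it is a lift of a Hamiltonian vector field commuting with $\xi_{M_i}$. Together these show that each vector field descends to $M_1\star_\ell M_2$ and brackets are preserved.

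The one point needing care, and what I expect to be the main obstacle, is the concrete geometric meaning of $\gh_{M_i}$ on the join. I want to identify the image of $\gh_{M_i}$ with the horizontal lift, with respect to $\eta_\ell$, of the Hamiltonian vector fields on the factor $N_i$ of $N_1\times N_2$ in diagram \eqref{s2comdia}. I would verify this using \eqref{conform} and the splitting \eqref{tbunsplit}: $\eta_\ell=l_1\eta_{M_1}+l_2\eta_{M_2}$ annihilates the horizontal lifts in $\cald_1$ and $\cald_2$, and the vertical components are absorbed into multiples of $\xi_\ell$ modulo $L_\ell$. This identification is what guarantees that the torus is maximal on the join. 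That maximality follows in turn from the maximality of $\gh_{M_i}$ on each $N_i$ and the product structure of the base $N_1\times N_2$.
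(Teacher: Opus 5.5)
Your argument is correct and is essentially the paper's. The paper just notes that the lemma follows from \eqref{t+add} together with \eqref{Lvec} and \eqref{Reebjoin}, which is exactly your change of basis from $\{\xi_{M_1}\}+\{\xi_{M_2}\}$ to $\{L_\ell\}+\{\xi_\ell\}$ followed by the quotient by $\{L_\ell\}$. Your final paragraph on maximality goes beyond what is needed, since the paper takes \eqref{t+add}, and the maximality built into it, as given.
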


Given the Sasaki cones $\gt^+(M_i)$ of the Sasaki manifolds $(M_i,\cals_i)$ we obtain the Sasaki cone of a join as an equality of conical sets
\begin{equation}\label{t+add2}
\gt^+(M_1\star_\ell M_2)= (\gt^+(M_1)\oplus \gt^+(M_2))/\{L_\ell\}.
\end{equation}
Clearly, we have
$$ \dim\gt^+(M_1\star_\ell M_2)= \dim\gt^+(M_1)+\dim \gt^+(M_2)-1. $$
Note that the right hand side of equation \eqref{t+add} says that the equivalence class always has a positive representative, that is it can be written as the sum of two vector fields each of which are pullbacks of Reeb vector fields from $M_1$ and $M_2$.

As in Section 2.3 of \cite{BoTo19a} there are maps $\gri_1:\gt^+(M_1)\longrightarrow \gt^+(M_1\star_\ell M_2)$ and  $\gri_2:\gt^+(M_2)\longrightarrow \gt^+(M_1\star_\ell M_2)$ defined by $\gri_i(a\xi_i+h_i)= a\xi_\ell +\bar{h_i}$ where $\bar{h_i}$ is the lift of a Hamiltonian vector field $h_i$ on $N_i$ by the map $\pi$ of diagram \eqref{s2comdia}. Here of course $a>0$. We call the image of $\gri_i$ in $\gt^+(M_1\star_\ell M_2)$ the $M_i$-subcone and by abuse of notation write $\gt^+(M_i)$. These two subcones intersect along the ray generated by $\xi_{\ell}$. The Sasakian structures that are most accessible through this construction are the elements of these subcones. Of course, the case $M_2=S^3$ is that studied in \cite{BoTo14a,BoTo19a}. Nevertheless, it is easy to show that

\begin{theorem}\label{orbquot}
Let $M_i$ be quasi-regular Sasakian manifolds and let $M_{\ell}=M_1\star_\ell M_2$ be their $\ell$-join. Then
the quotient of $M_{\ell}$ by the $S^1$ action generated by a quasi-regular Reeb vector field $\xi_i$ in the subcone $\gt^+(M_i)$  is an orbibundle with generic fibers $N_{\xi_i}/\bbz_m$ for some finite cyclic subgroup  $\bbz_m$ of $S^1$, where $N_{\xi_i}$ is the quotient of $M_i$ with respect to the circle generated by the Reeb vector field $\xi_i$. 
\end{theorem}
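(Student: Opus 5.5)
Take $i=1$; the case $i=2$ is symmetric. By the description of the map $\gri_1$, a Reeb field in the $M_1$-subcone has the form $\xi=\gri_1(\xi_1)$ with $\xi_1=a\xi_{M_1}+h_1\in\gt^+(M_1)$ quasi-regular on $M_1$. The plan is to realise $\xi$ by a vector field on $M_1\times M_2$ and pass to quotients in two orders: first by $L_\ell$, then by $\xi$, and alternatively by the torus they generate. Concretely, since $\xi_\ell=\frac{1}{2l_1}\xi_{M_1}+\frac{1}{2l_2}\xi_{M_2}$ and $L_\ell=\frac{1}{2l_1}\xi_{M_1}-\frac{1}{2l_2}\xi_{M_2}$, we have $\xi_\ell\equiv \frac{1}{l_1}\xi_{M_1}$ modulo $L_\ell$. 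Hence, up to the identification \eqref{t+add}, the vector field $\xi$ lifts to
\[
\tilde\xi=\tfrac{a}{l_1}\xi_{M_1}+h_1 ,
\]
a multiple of $\xi_1$ acting on the first factor only. Up to transverse scaling, which does not change the orbit foliation, we may therefore take $\tilde\xi$ to generate the circle action $\cala_{\xi_1}$ on $M_1$, trivially extended to $M_1\times M_2$.

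Next consider the $2$-torus $T^2\subset \bbt_{M_1}\times\bbt_{M_2}$ generated by $\tilde\xi$ and $L_\ell$. It is closed because $\xi_1$ is quasi-regular, so its flow closes up, and the $S^1$ of \eqref{S1action} closes up as well; the action is locally free since the two fields are pointwise linearly independent, thanks to $\xi_{M_2}$ appearing only in $L_\ell$. The quotient of $M_\ell$ by the circle generated by $\xi$ is $(M_1\times M_2)/T^2$. Quotienting in the other order, first by $\tilde\xi$, gives $N_{\xi_1}\times M_2$ with a residual circle, the image of $T^2$ modulo the $\tilde\xi$-circle. This residual circle acts on $M_2$ through $\cala_2$, i.e. via $-l_1\theta$ on $\xi_{M_2}$, and on $N_{\xi_1}$ by the induced action of $\xi_{M_1}$, which commutes with $\xi_1$ and descends to an orbifold circle action on $N_{\xi_1}$ (possibly ineffective). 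Projecting to the $M_2$-factor gives a map
\[
(N_{\xi_1}\times M_2)/S^1\longrightarrow M_2/S^1_{\xi_{M_2}}=(N_2,\grD_{N_2}),
\]
an orbifold bundle. Its fiber over $[x_2]$ is $N_{\xi_1}$ modulo the stabiliser of $x_2$ in the residual circle. At generic points of $N_2$ this stabiliser is the finite cyclic group $\bbz_m$, where $m$ records the order $l_1$ with which the residual circle wraps the $\xi_{M_2}$-orbits, together with any ineffectiveness coming from the relation between $\tilde\xi$ and $\xi_{M_1}$. So the generic fiber is $N_{\xi_1}/\bbz_m$ with $\bbz_m\subset S^1$ acting through the $\xi_{M_1}$-action on $N_{\xi_1}$.

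The main obstacle is the bookkeeping in this last step. One has to justify that the $T^2$ quotient is a genuine orbifold, which follows from local freeness and compactness via the standard quotient result for locally free torus actions in \cite{BG05}. One also has to identify precisely the cyclic group $\bbz_m$ and check that it is independent of the generic point. I would handle this by writing $T^2=\bbr^2/\Lambda$ with explicit lattice generators: the period of $\cala_{\xi_1}$, and the element $(l_2\theta,-l_1\theta)$ at $\theta=2\pi$. The generic stabiliser on the $M_2$-factor is then computed as $\Lambda$ intersected with the kernel of the projection to the $\xi_{M_2}$-circle; this is a finite cyclic group, and its image in $S^1$ is the stated $\bbz_m$.
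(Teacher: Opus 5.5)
Your argument is correct, and it takes a more explicit route than the paper's. The paper's proof is a short sketch. It lets $\xi_i$ have period $m$, passes to the period-one rescaling $\xi_{i,m}$, and says the corresponding quotient of $M_i$ is $N_{\xi_i}/\bbz_m$. It then asserts that forming the $\ell$-join as the $S^1$-quotient of $M_1\times M_2$ in \eqref{s2comdia} yields an orbibundle with that generic fiber, leaving both the base and the mechanism producing $\bbz_m$ implicit.

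You instead write the quotient as $(M_1\times M_2)/T^2$ and divide first by the $\tilde\xi$-circle. You then recognize what remains as the associated orbibundle $M_2\times_{S^1}N_{\xi_1}\to(N_2,\grD_{N_2})$ for the locally free action $\cala_2(-l_1\theta)$. So the fiber is $N_{\xi_1}$ modulo the generic stabilizer of the join circle on $M_2$. This buys two concrete identifications:
\begin{itemize}
\item the base: $N_2$ for the $M_1$-subcone, and symmetrically $N_1$ for the $M_2$-subcone;
\item the cyclic group: the image of $\{\theta: l_1\theta\in 2\pi\bbz\}\cong\bbz_{l_1}$ acting on $N_{\xi_1}$ through the action induced by $\cala_1(l_2\theta)$. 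Since $\gcd(l_1,l_2)=1$, this is the group of $l_1$-th roots of unity in the $\xi_{M_1}$-circle, so $m$ divides $l_1$, with $l_1/m$ the order of the subgroup acting trivially on $N_{\xi_1}$.
\end{itemize}
As checks, this gives $m=1$ and quotient $N_1\times N_2$ for $\xi=\xi_\ell$. It is also consistent with the $\bbc\bbp^1[\mathbf v]/\bbz_m$ fibers familiar from $S^3_\bfw$-joins.

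Your closing lattice computation is not needed. As phrased, ``$\Lambda$ intersected with the kernel'' does not quite parse, since that kernel contains the whole $\tilde\xi$-circle. The stabilizer in the join circle of a point of $M_2$ with trivial $\xi_{M_2}$-isotropy is visibly $\bbz_{l_1}$, independently of the point.

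One small slip to fix: $\frac{a}{l_1}\xi_{M_1}+h_1=\frac{1}{l_1}(a\xi_{M_1}+l_1h_1)$ is not a multiple of $\xi_1=a\xi_{M_1}+h_1$ unless $l_1=1$ or $h_1=0$. It is cleaner to embed $\gt^+(M_1)$ by the natural map $\xi_1\mapsto[\xi_1]\in(\gt_{M_1}\oplus\gt_{M_2})/\{L_\ell\}$. This lands in the Sasaki cone of the join because $\eta_\ell(\xi_1)=l_1\eta_{M_1}(\xi_1)>0$, and it sends $\xi_{M_1}$ to $l_1\xi_\ell$. With this map the lift is literally $\xi_1$ on the first factor. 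In any case, the rest of your argument goes through verbatim with whichever quasi-regular Reeb field on $M_1$ lifts $\xi$.
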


\begin{proof}
We let $\xi_i$ be a quasiregular Reeb vector field with quotient orbifold $(N_{\xi_i},\grD_{N_{\xi_i}})$. Suppose also that $\xi_i$ has period $m$ and lies in the subcone $\gt^+(M_i)$. Let $\xi_{i,m}$ denote the corresponding vector field with period $1$. Then the quotient of $M_i$ by the circle action generated by $\xi_{i,m}$ is $N_{\xi_i}/\bbz_m$. So when we define the $\ell$-join as in diagram \eqref{s2comdia} giving $M_{\ell}$ as an $S^1$ quotient of the product $M_1\times M_2$, we obtain an orbibundle with generic fibers $N_{\xi_i}/\bbz_m$.

\end{proof}

\section{Transition to Examples}\label{transition}

Recall that for a given $p\in {\mathbb R}$, a Kähler manifold $(N^n,J,g,\omega)$ with a fixed Killing potential $f$ (i.e $\call_{\nabla^gf}J=0$) is said to be an {\it $(f,p)$-extremal K\"ahler structure} if the $(f,p)$-\emph{scalar curvature} 
\begin{equation}\label{weightedscal}
Scal_{f,p}(g)= f^2 Scal(g)  -2(p-1) f\Delta_g f - p(p-1)|df|^2_g,
\end{equation} is a Killing potential, \cite{ApCa18}.  This is a direct extension of the now classical extremal K\"ahler metrics of Calabi \cite{cal82} which correspond to the case $(f,p)=(1,p)$ for any $p$. 
When $p=2n$, $Scal_{f,2n}(g)$ computes the scalar curvature of the Hermitian metric $h = f^{-2} g$. In particular, the case $n=2$ and $p=4$ is important due
to the discovery by LeBrun implying that a constant $Scal_{f,4}(g)$ corresponds to strongly Hermitian solutions of the Einstein-Maxwell equations \cite{Leb10,Leb15}.
On the trivial and first Hirzebruch surfaces, LeBrun found many such solutions (see \cite{Leb15, Leb16}) among which the Page metric \cite{Pag} on the first Hirzebruch surface is a special case.

Here our focus will be on the case where $p=n+2$ and $(N,J,g,\omega)$ is a compact K\"ahler manifold such that the K\"ahler class $[\omega]$ is rational. Then, after an appropriate rescaling,
$(J,g,\omega)$ is the transverse K\"ahler structure of a $(2n+1)$-dimensional Sasaki manifold $(M, \cald, J,\xi_1)$ in the sense that $\pi^*\omega=d\eta$ where $\eta$ is a contact $1$-form with regular Reeb vector field $\xi_1$ and such that $(\cald=\ker \eta, J)$ is a CR structure and $\pi : M\rightarrow N$ is the principal $S^1$-bundle defined by the regular foliation $\calf_{\xi_1}$. Any positive, smooth Killing potential $f$ on $(N,J,g,\omega)$ pulls back to a
$\xi_1$-invariant positive Killing potential $f$ on $(M, \cald, J,\xi_1)$ which in turn determines a new Sasaki structure $(M, \cald, J,\xi_f)$ with the same underlying CR-structure but another Reeb 
vector field, $\xi_f$ (with contact form $\eta_f=f^{-1}\eta$) such that $\xi_f$ and $\xi_1$ commute. For general references see Sections 8.1 and 8.2 of \cite{BG05}, Lemma 7.1 of \cite{BoTo11}, and Section 1 of  \cite{ApCa18}.
We now have the following lemma as obtained by Apostolov and Calderbank:

\begin{lemma}[\cite{ApCa18}]\label{ac}
The metric $g$ is an {\it $(f,n+2)$-extremal K\"ahler metric} if and only if $\xi_f$ is the Reeb vector field of an extremal Sasaki structure in the sense of \cite{BGS06}. 
\end{lemma}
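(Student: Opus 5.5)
Since the transverse K\"ahler structure of $\xi_f$ is no longer pulled back from $N$, the plan is to compare the two transverse geometries directly. We compute the transverse scalar curvature of the Sasaki structure $(\xi_f,\eta_f)$, $\eta_f=f^{-1}\eta$, in terms of data on $(N,g,\omega)$ and $f$, and then compare the extremality conditions on both sides.

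\emph{Step 1 (transverse metric).} On $\cald=\ker\eta=\ker\eta_f$ we have
\[
d\eta_f=f^{-1}d\eta-f^{-2}df\wedge\eta .
\]
Restricted to $\cald$, this gives the transverse K\"ahler form $f^{-1}\pi^*\omega$. So the transverse metric of $\xi_f$ on $\cald$ is $g_f=f^{-1}g$, viewed on the quotient by the $2$-torus (or local quotient) generated by $\xi_1$ and $\xi_f$. The same CR structure $J$ is used throughout.

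\emph{Step 2 (scalar curvature).} The transverse geometry of $\xi_f$ is the local quotient of $M$ by $\xi_f$, not $N$. The standard approach (Apostolov--Calderbank, via the Boothby--Wang/Tanaka--Webster calculus) is to compute the Tanaka--Webster scalar curvature of $\eta_f=f^{-1}\eta$ using the CR conformal change formula. For a pseudohermitian rescaling $\eta\mapsto f^{-1}\eta$ in CR dimension $n$, the Webster scalar curvature transforms through a second-order operator in $f$. Here $\eta$ has transverse scalar curvature $Scal(g)$, and $f$ is $\xi_1$-invariant. The resulting expression should be
\[
Scal_{\xi_f}^T = f\,Scal(g)-2(n+1)\Delta_g f-(n+2)(n+1)f^{-1}|df|^2_g ,
\]
which equals $f^{-1}Scal_{f,n+2}(g)$ by \eqref{weightedscal} with $p=n+2$. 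The Killing condition on $f$ is used to kill the terms involving $\xi_1(f)$ and the $J$-anti-invariant Hessian parts. I expect this step, and getting the exact constants that force $p=n+2$, to be the main obstacle. I would verify the constants on the model case $S^{2n+1}$ with $f$ the restriction of a linear Hamiltonian, where every $\xi_f$ is known to be extremal.

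\emph{Step 3 (matching the Killing-potential conditions).} A structure is extremal in the sense of \cite{BGS06} iff the gradient of $Scal^T_{\xi_f}$ is transversely holomorphic, i.e.\ $Scal^T_{\xi_f}$ is a Killing potential for the Sasaki structure $\xi_f$. We need the correspondence between Killing potentials: a $\bbt$-invariant function $h$ is a $\xi_f$-Killing potential (its associated contact vector field, built from $\eta_f$, preserves $J$) iff $fh$ is a $\xi_1$-Killing potential. This holds because both correspond to the same CR vector field $X\in\gt$: the potentials are $\eta_f(X)=f^{-1}\eta(X)$ versus $\eta(X)$, and for $X$ commuting with $\xi_1$ the function $\eta(X)$ is a pullback of a Killing potential on $N$. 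Applying this with $h=Scal^T_{\xi_f}=f^{-1}Scal_{f,n+2}(g)$, extremality of $\xi_f$ becomes the condition that $Scal_{f,n+2}(g)$ is a Killing potential on $N$, which proves the lemma.

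One remaining detail is that the extremal vector field must lie in $\gt$, or at least commute with $\xi_1$, so that the potential descends to $N$. This follows by averaging over the torus, since the extremal field is central in the relevant automorphism algebra.
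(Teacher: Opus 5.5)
The paper does not prove this lemma; it quotes it from \cite{ApCa18}. Your outline is essentially the Apostolov--Calderbank argument behind that citation, and it is correct: the Jerison--Lee transformation law for the Webster scalar curvature under $\eta\mapsto\eta_f=f^{-1}\eta$ gives $Scal^T_{\xi_f}=f^{-1}Scal_{f,n+2}(g)$, and a single CR vector field $X$ has potentials $\eta_f(X)=f^{-1}\eta(X)$, so torus-invariant $\xi_f$-Killing potentials are exactly $f^{-1}$ times Killing potentials on $N$.

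Two minor corrections. First, the transformation law holds for every positive $\xi_1$-invariant $f$, so the Killing condition is not what removes extra terms. Its role is to make $\xi_f$ a CR vector field, so that the Webster curvature of $\eta_f$ is the transverse scalar curvature of a Sasaki structure. Second, the final averaging step is unnecessary. Since $\xi_1(f)=0$, $\xi_1$ preserves $\eta_f$ and $J$, so it is an automorphism of the Sasaki structure of $\xi_f$ and automatically commutes with the extremal vector field.
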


Furthermore, from the work in \cite{ApCa18}, it follows that this extremal Sasaki metric has constant scalar curvature exactly when 
\begin{equation}\label{cscS}
Scal_{f,m+2}(g)/f = \text{constant}.
\end{equation}

We are now going to consider K\"ahler products of the form 
$(N,g,\omega)=(N_1\times N_2, g_1+g_2, \omega_1+\omega_2)$. Implicitly, we are assuming that
the K\"ahler class $\left[\omega/2\pi\right]$ is a rational rescale of an integer K\"ahler class on $N$. 
In that case we can do an overall rational rescale to get a primitive polarization of the complex manifold $N$
(corresponding to choosing primitive polarizations of the complex manifolds $N_1$ and $N_2$ and then co-prime positive integers $(l_1,l_2)$ for the join \eqref{s2comdiasmooth})
and hence have a natural Sasaki structure above this product (via the Boothby-Wang construction). 

We first remind ourselves of the following:
Assume $(N_1,g_1)$ and $(N_2,g_2)$ are two Riemannian manifolds and
let $g=g_1+g_2$ be the product metric on the product manifold
$N=N_1\times N_2$. Then we have a few elementary facts:
\begin{itemize}
\item The scalar curvature, $Scal_g$, of $g$ is simply given by the sum of the
(pull-backs) of the scalar curvatures of $g_1$ and $g_2$:
$Scal_g=Scal_{g_1}+Scal_{g_2}$.
\item The Laplacian, $\Delta_g$, of $g$ on $0$-forms is also just the sum of the Laplacians 
of $g_1$ and $g_2$:
$\Delta_g=\Delta_{g_1}+\Delta_{g_2}$.
\item If $f$ is a smooth real valued function on $N_i$ for $i=1$ or $i=2$, then the pull-back to $N$ of $f$ via the natural projection
is a smooth real valued function on $N$. Denoting for convenience this function by $f$ as well, we have that
(suppressing the obvious pull-backs in the notation)
$|df|^2_g=|df|^2_{g_i}$ and, by the above bullet point, also $\Delta_g f=\Delta_{g_i} f$.
\end{itemize}

\section{Robustness of the existence of cscS and extremal Sasaki structures under the join construction}\label{main}
The motivating question for the following examples was as follows:
\begin{question}\label{generalquestion}
Does the Sasaki cone of a join of two arbitrary Sasaki manifolds that each admit a cscS-ray somewhere in their 
individual Sasaki cones always have a (potentially irregular) cscS-ray?
\end{question}

We will see that the answer in general is no (Example \ref{nocscex}), but that there is a nice family of cases where the answer is yes (Propositions \ref{affirmative}, \ref{affirmative2}, and \ref{affirmative3}). 
Along the way, we will also discover new cases of multiple cscS-rays in a Sasaki cone and even a case where 
two cscS-rays are separated by a `moat' in the Sasaki cone in which not even extremal Sasaki rays exists (Example \ref{cscmoatseparated}).

\begin{remark}\label{cscSinMi}
Before we proceed, we remind the reader about the following facts:
\begin{itemize}
\item If $M_1$ is the Boothby-Wang constructed Sasaki manifold over a polarized compact K\"ahler manifold with constant scalar curvature, then obviously the Sasaki cone
admits a cscS-ray. (Namely, the regular ray.)
\item If $M_2$ is the Boothby-Wang constructed Sasaki manifold over a polarized minimal compact ruled surface $\bbp(\calo \oplus L)\rightarrow \Sigma$, where $\Sigma$ is a compact Riemann surface, 
$\calo$ is the trivial holomorphic line bundle over $\Sigma$, and $L$ is any holomorphic line bundle over $\Sigma$, then
the Sasaki cone also admits a cscS-ray. For the genus of $\Sigma$ at most one, this can be seen most directly from e.g. Theorem 3.1 in \cite{BHLT23}.   
Generally, for an arbitrary value of the genus of $\Sigma$, the $m_1=m_2=1$ case of Proposition 4.22 of \cite{BHLT16} tells us that $M_2$ can also be realized as the regular Sasaki structure in a $S^3_\bfw$-join 
as defined in \cite{BoTo14a}. 
Then the existence of a cscS-ray follows from  Theorem 1.1 of \cite{BoTo14a}. For an alternative approach, see \cite{BoTo25}.
\end{itemize}
\end{remark}

\subsection{The case of $N_1\times N_2=\bbc\bbp^1 \times \bbf_k$}
One of the Fano threefolds appearing in \cite{ACCF23} is the product $\bbc\bbp^1 \times \bbf_1$, where $\bbf_1$ is the first Hirzebruch surface. 
We know that this K\"ahler manifold does not admit a K\"ahler-Einstein metric\footnote{In fact, every K\"ahler class on $\bbc\bbp^1 \times \bbf_1$ (which by the K\"unneth formula will be a product class) admits a product K\"ahler metric that is extremal 
but not of constant scalar curvature.}. However, since the natural Sasaki manifold arising as the circle bundle over $\bbc\bbp^1 \times \bbf_1$ defined by the canonical polarization is clearly a toric Sasaki manifold, we know 
from \cite{FOW06} that the Sasaki cone admits a Sasaki-Einstein metric up to isotopy.\footnote{In this section everything will be ``up to isotopy.'' In particular, when we consider a Boothby-Wang construction given by some polarization, that is, some choice of an integer K\"ahler class, we simply pick a random K\"ahler form in that class for the actual construction. Another choice of K\"ahler form in the same class would give us a Sasaki structure in the same isotopy class.} 
Thus, this is a case that would support an affirmative answer to Question \ref{generalquestion}. 

More generally we can ask
 \begin{question}\label{question2}
Choosing an arbitrary polarization of $N_k= \bbc\bbp^1 \times \bbf_k$, where $\bbf_k$ is the $k^{th}$ Hirzebruch surface, will the corresponding Sasaki manifold over $N_k$ admit a Sasaki structure with constant scalar curvature 
somewhere in the Sasaki cone?
\end{question}

\begin{remark}
Concerning the manifold $N_k$ we note the following:
\begin{itemize}
\item The manifold $N_k$ is a special case of a stage $3$ Bott manifold as studied in \cite{BoCaTo17}. Thus, answering Question \ref{question2} can be viewed as a companion result to the work in \cite{BoTo21}.
\item The Sasaki manifolds over $N_k$ are each of the form $M_1\star_{l_1,l_2} M_2$ with $M_1=S^3$. That is, they are all $S^3$-joins 
with a Sasaki manifold $M_2$ over $\bbf_k$. 
In previous work by the authors the natural sub-cone of the Sasaki cone (of the join) arising from the Sasaki cone of $S^3$ was the object of interest for such joins.
In this paper we are looking in a different direction of the Sasaki cone. The main reason for this is that $\bbf_k$ does not admit a constant scalar curvature K\"ahler metric and, in particular, 
the joins we are considering in this paper are not of the same nature as the joins considered in e.g. Theorem 3.2 of \cite{BoTo19a}.
\item We might think of $N_k$ as a sort of degenerate admissible manifold and hence Theorem 3.1 in \cite{BHLT23} would suggest a positive answer to Question \ref{question2}.
We prove in Section \ref{answerquestion2} that this is indeed true.
\end{itemize}

\end{remark}

\subsubsection{Review of admissible metrics on $\bbf_k$}\label{admreview}
In the following we will assume that $k>0$. This is okay since Question \ref{question2} is completely trivial when $k=0$.
Then we define the $k^{th}$ Hirzebruch surface, $\bbf_k$, to be (the total space of) the bundle $\bbp(\calo \oplus \calo(k))\rightarrow \bbc\bbp^1$, where $\calo(k)$ denotes the degree $k$ holomorphic line bundle over $\bbc\bbp^1$ and $\calo$ denotes the trivial bundle over $\bbc\bbp^1$.
In \cite{cal82} Calabi famously constructed extremal K\"ahler metrics in every K\"ahler class of $\bbf_k$ and these metrics are each a special case of \emph{admissible K\"ahler metrics} as defined in \cite{ACGT08}. 
What follows is a quick overview (very similar to Section 3.1 of \cite{BHLT25}) on how to build admissible K\"ahler metrics
on $\bbf_k$. We will use the notation from \cite{ACGT08} and \cite{ApMaTF18} to which we refer for references as well as more technical details on what follows below.

Let $g_{{\mathbb C}{\mathbb P}^1}$ be the
K\"ahler metric on ${\mathbb C}{\mathbb P}^1$ of constant scalar curvature $2s$, with K\"ahler form
$\omega_{{\mathbb C}{\mathbb P}^1}$, such that
$c_{1}({\mathcal O}(k)) =\left[\frac{\omega_{{\mathbb C}{\mathbb P}^1}}{2 \pi}\right]$. Then $g_{{\mathbb C}{\mathbb P}^1}$ is a certain rescale of the Fubini-Study metric and,
since the Ricci form $\rho_ {{\mathbb C}{\mathbb P}^1}$ equals $s\omega_{{\mathbb C}{\mathbb P}^1}$, we see that $s= 2/k$.

There exist Hermitian metrics $h_0$ on $\calo$ and $h_{\infty}$ on $\calo(k)$ whose  respective Chern connections  have curvatures
$0$ and  $\omega_{\bbc \bbp^1}$, respectively. Let $r_0$ and $r_{\infty}$ denote the corresponding fibre-wise norm functions.
We denote the generator of the circle action on $\calo$  by $K_{0}$  and the generator of the circle action on $\calo(k)$
by $K_{\infty}$. Using the Chern connections of $(\calo,  h_0)$  and $(\calo(k),h_{\infty})$, we let $\theta_0$ and $\theta_{\infty}$ be the connection 1-forms defined on the corresponding unitary bundles, i.e. satisfying
\begin{equation*}
\begin{split}
 \theta_0(K_0) &=1,  \ \ d\theta_0 = 0; \\
\theta_{\infty} (K_{\infty})&=1,  \ \ d\theta_{\infty} = -\omega_{\bbc \bbp^1}.
 \end{split}
 \end{equation*}
Thus, the fibres-wise Euclidean structures (viewed as tensors on the total spaces of $\calo$ and $\calo(k)$) take the following (angular momentum) form
\begin{equation*}
g_0=\frac{{d\gz_0}\otimes d\gz_0}{2\gz_0} + 2\gz_0 ( \theta_0\otimes \theta_0), \ \ g_{\infty}=\frac{d\gz_{\infty}\otimes d\gz_{\infty}}{2\gz_{\infty}} + 2\gz_{\infty} (\theta_{\infty}\otimes \theta_{\infty}),
\end{equation*}
where  $\gz_0:= r^2_0/2$, $\gz_{\infty} := r_{\infty}^2/2$ are the fibre-wise momentum coordinates.

Let $0<x<1$  be a fixed real number. We then  consider the smooth positive semidefinite tensor  on the total space of $\calo \oplus \calo(k)$:
\begin{equation*}
\frac{(1+x)\gz_0+ (1-x)\gz_{\infty}}{2x}g_{\bbc \bbp^1} + g_{0} + g_{\infty}.
\end{equation*}
Considering the ``K\"ahler quotient''  for this tensor with respect to the $S^1$-action generated by $K_0 + K_{\infty}$ at the  level set $\gz_0 + \gz_{\infty}=2$ on $\calo \oplus \calo(k)$, we denote by  $g_c$ the smooth (possibly degenerate) tensor field  induced on 
$\bbf_k$ and by $\omega_x = g_c J_c$ the corresponding smooth $(1,1)$-form, where $J_c$ is the induced (canonical) complex structure on $\bbf_k$. Letting $\gz:=(\gz_0-\gz_{\infty})/2\in [-1,1]$, $(g_c, \omega_x)$ is written  on $\bbf_k^0:=\gz^{-1}(-1,1)$ as:
\begin{equation}\label{g}
g_c=\frac{1+x\gz}{x}g_{\bbc \bbp^1}+\frac {d\gz^2}
{\Theta_c (\gz)}+\Theta_c (\gz)\theta^2,\quad
\omega_x = \frac{1+x \gz}{x}\omega_{\bbc \bbp^1} + d\gz \wedge
\theta,
\end{equation}
where $\Theta_c (\gz)= 1-\gz^2$ and $\theta = {\theta}_{0}- {\theta}_{\infty}$ satisfies
\begin{equation}\label{theta}
d\theta = \omega_{\bbc \bbp^1}.
\end{equation}
It follows that $(g_c, \omega_x)$ defines a K\"ahler metric on $\bbf_k^0$ and it is shown in \cite{ACGT08} that $(g_c, \omega_x)$ gives rise to a genuine, non-degenerate, smooth K\"ahler metric on $\bbf_k$.

We notice that $\gz$ is the momentum map with respect to $\omega_x$ of the induced $S^1$-action on $\bbf_k$ corresponding to multiplication on $\calo$ or,  equivalently, the $S^1$-action induced by the push forward of $K=(K_0-K_{\infty})/2$ to the quotient 
space $\bbf_k$. In particular, $\gz$ is a Killing potential.
We have $E_\infty :=P(0\oplus \calo(k)) = \gz^{-1}(-1), E_{0} :=P(\calo \oplus 0) = \gz^{-1}(1),$ and $\bbf_k^0$ corresponds to the $\bbc^*$-bundle over $\bbc \bbp^1$,  obtained from the $\bbc \bbp^1$-bundle $\bbf_k \rightarrow \bbc \bbp^1$ by deleting the zero and infinity sections $E_0$ and $E_\infty$.
 
As observed in \cite{ACGT08}, due to \cite{HwaSi02}, if instead of $\Theta_c(\gz)$ we take in \eqref{g} any smooth function $\Theta(\gz)$ on $[-1,1]$, satisfying
\begin{align}
\label{positivity}
(i)\ \Theta(\gz) > 0, \quad -1 < \gz <1,\quad
(ii)\ \Theta(\pm 1) = 0,\quad
(iii)\ \Theta'(\pm 1) = \mp 2.
\end{align}
then
\begin{equation}\label{metric}
g_x=\frac{1+x\gz}{x}g_{\bbc \bbp^1}+\frac {d\gz^2}
{\Theta (\gz)}+\Theta (\gz)\theta^2,\quad
\omega_x =\frac{1+x\gz}{x}\omega_{\bbc \bbp^1} + d\gz \wedge
\theta,
\end{equation}
with \eqref{theta} yields a smooth $S^1$-invariant K\"ahler metric on $\bbf_k$, compatible with the same symplectic form $\omega_x$. The corresponding  complex structure  is then given on $\bbf_k^0$ by the horizontal lift  of the base complex structure on $\bbc \bbp^1$ (with respect to the chosen Chern connections on $\calo$ and $\calo(k)$) along with the requirement 
\begin{equation}\label{complex}
Jd\gz = \Theta \theta
\end{equation}
on the fibres. Such K\"ahler metrics on $\bbf_k$ are called {\it admissible K\"ahler metrics}.

It is convenient to define a function $F(\mathfrak{z})$ by the formula
\begin{equation}
\Theta(\mathfrak{z})= \frac{F(\mathfrak{z})}{(1+x
\mathfrak{z})}.
\end{equation}
Since $(1+x
\mathfrak{z})$ is positive for $-1<\mathfrak{z}<1$, conditions
\eqref{positivity}
imply the following equivalent conditions on $F(\mathfrak{z})$:
\begin{align}
\label{positivityF}
(i)\ F(\mathfrak{z}) > 0, \quad -1 < \mathfrak{z} <1,\quad
(ii)\ F(\pm 1) = 0,\quad
(iii)\ F'(\pm 1) = \mp 2(1 \pm x).
\end{align}

Let $\mathcal{K}^{\rm adm}(\bbf_k,\omega_x)$ denote the space of all admissible K\"ahler metrics associated to a given choice of $x\in (0,1)$. From the discussion above, $\mathcal{K}^{\rm adm}(\bbf_k,\omega_x)$ is identified with a Fr\'echet space consisting of all smooth functions $\Theta(z)$ on $[-1,1]$ satisfying \eqref{positivity}. The space $\mathcal{K}^{\rm adm}(\bbf_k, \omega_x)$  (associated  to a given choice of $x\in(0,1)$)  can also be equally parametrized by the fibre-wise symplectic potentials $u(\gz)$, where $u(\gz)$ is defined up to an affine-linear  function of $\gz$ by  $u''(\gz)= \frac{1}{\Theta(\gz)}$. It is shown in \cite[p. 559-560]{ACGT08} that the fibre-wise Legendre transform maps  $\mathcal{K}^{\rm adm}(M, \omega_x)$ to the space $\mathcal{K}(\bbf_k, J_c, [\omega_x])=\{\varphi \in C^{\infty}(\bbf_k) : \omega_x + dd^c \varphi >0\}$ of  $J_c$-compatible K\"ahler metrics in the class $[\omega_x]$.
Therefore, with all else fixed, we may view the set of the functions $F$ satisfying \eqref{positivityF} as parametrizing a certain family of K\"ahler metrics within the same K\"ahler class of $\bbf_k$. For each choice of $F$, an appropriate pull-back of $\gz$ remains a Killing potential and hence any affine function of $\gz$ is a Killing potential.

One easily checks that the K\"ahler class of an admissible metric \eqref{metric} is given by
\begin{equation}\label{class1}
[\omega_x] = 4\pi E_{0}^*+ \frac{2\pi(1-x)k}{x} C^*,
\end{equation}
where $C$ denotes a fiber of the ruling $\bbf_k \rightarrow
{\mathbb C}{\mathbb P}^1$ and $D^*$ denotes the Poincar\'e dual of a divisor $D$.
Up to an overall rescale, every K\"ahler class in the K\"ahler cone may be represented by an admissible K\"ahler metric. 

The scalar curvature of an admissible metric $g_x$ as above is given by 
\begin{equation}\label{admscal}
Scal_x=\frac{2sx}{1+x\gz}-\frac{F''(\gz)}{1+x\gz}.
\end{equation}
If $f(\gz)$ is a smooth function of $\gz$ (and hence $f$ can be viewed as a smooth function on $\bbf_k$) and if $\Delta_{g_x}$ denotes the Laplacian of 
$g_x$, then
\begin{equation}\label{admlaplace}
\Delta_x f =-\frac{\frac{d}{d\gz}\left[F(\gz)f'(\gz)\right]}{1+x\gz}.
\end{equation}
Finally, it is also clear that
\begin{equation}\label{normdz}
|d\gz|^2_{g_x}=\frac{F(\mathfrak{z})}{(1+x
\mathfrak{z})}.
\end{equation}

\subsubsection{Answering Question \ref{question2}}\label{answerquestion2}

We will now consider a product metric 
\begin{equation}\label{productmetric}
g=g_{a} + g_x,\quad \omega=\omega_{a}+\omega_x
\end{equation}
on $N_k$, where $a\in \bbq^+$, $g_{a}$ is the (re-scale of the) Fubini-Study metric on $\bbc\bbp^1$ 
with scalar curvature $2a$, and $g_x$ is given by \eqref{metric} such that $x\in (0,1)\cap \bbq$. Naturally $g_{a}$ and $g_x$ are both pulled back to $N_k$.
The corresponding product K\"ahler class on $N_k$, rescaled by $\frac{1}{2\pi}$, is then rational and by varying $a\in \bbq^+$ and $x\in (0,1)\cap \bbq$, while rescaling as appropriate, 
we obtain all possible primitive polarizations of $N_k$. Given such a polarization we consider the Boothby-Wang constructed Sasaki structure on the corresponding $S^1$-bundle over
$N_k$. 

Assume $c\in (-1,1)$ and let $f=c\gz+1$, where $\gz$ (from Section \ref{admreview}) is assumed to be pulled back to $N_k$ and hence a positive Killing potential on $N_k$.
Since the complex dimension of $N_k$ is $3$, we are setting $p=3+2=5$.
Using \eqref{weightedscal}, \eqref{admscal}, \eqref{admlaplace}, \eqref{normdz}, and the fact that we have a product metric, we easily calculate that
$$
Scal_{c\gz+1,5}(g) = (c\gz+1)^2\left(2a+\frac{2sx}{1+x\gz}\right)- (c\gz+1)^2\frac{F''(\gz)}{(1+x\gz)}+8c(c\gz+1)\frac{F'(\gz)}{(1+x\gz)}-20c^2\frac{F(\gz)}{(1+x\gz)}
$$
is a function of $\gz$. Thus $Scal_{c\gz+1,5}(g)$ being a Killing potential, $A_1\gz + A_2$, is equivalent to the following ODE
$$ (c\gz+1)^2F''(\gz) - 8c(c\gz+1)F'(\gz)+20c^2F(\gz)= (c\gz+1)^2\left(2a(1+x\gz)+2sx\right)-(A_1\gz + A_2)(1+x\gz),$$
which simplifies to
\begin{equation}\label{ode}
\frac{d^2}{d\gz^2}\left[\frac{F(\gz)}{(c\gz+1)^4}\right]=(c\gz+1)^{-6}\left( (c\gz+1)^2\left(2a(1+x\gz)+2sx\right)-(A_1\gz + A_2)(1+x\gz)\right)
\end{equation}

\begin{remark}
Note that, generalizing the original definition of \cite{ACGT08}, a metric of $g$'s type
is considered admissible by Definition 7.1 in Section 7.1 of \cite{ApCaLe21}.  As such, \eqref{ode} above
is merely a special case of equation (40) in Section 7.1 of \cite{ApCaLe21}. Further, as is stated in Section 7.1 of \cite{ApCaLe21}, the existence of a solution to \eqref{ode} under the 
endpoint conditions \eqref{positivityF} is indeed a straightforward consequence of the arguments carried out in \cite{ApMaTF18}.

Since we want to carefully explore the existence of constant scalar curvature Sasaki metrics in the Sasaki cone
over a polarization of $N_k$, we are going to elaborate on the solution in this case, confirm that the positivity condition {\em (i)} of \eqref{positivityF} holds (see Lemma \ref{Fispositive} below), and then consider \eqref{cscS}.

\end{remark}

Following Section 2.3 of \cite{ApMaTF18} (with minor adaptations), we can derive that a unique solution to \eqref{ode} under the conditions {\em (ii)} and {\em (iii)} of \eqref{positivityF} (equivalent to \eqref{positivity}) is given by
\begin{equation}\label{thisisF}
F(\gz)= (c\gz+1)^4\left(\frac{2(1-x)}{(1-c)^4}(\gz+1) + \int_{-1}^{\gz}Q(t)(\gz-t)dt\right),
\end{equation}
where 
$$
Q(\gz)=(c\gz+1)^{-6}\left((c\gz+1)^2\left(2a(1+x\gz)+2sx\right)-(A_1\gz + A_2)(1+x\gz)\right),
$$
and $A_1$, $A_2$ are solutions to the system
$$
\begin{array}{ccl}
\alpha_{1,-6} A_1+\alpha_{0,-6} A_2 &=&2\beta_{0,-4}\\
\\
\alpha_{2,-6} A_1+\alpha_{1,-6} A_2 &=&2\beta_{1,-4}
\end{array}
$$
with
$$
\alpha_{r,q}=\int_{-1}^1 (ct+1)^q t^r(1+xt)dt
$$
and
$$
\beta_{r,q}=\int_{-1}^1\left(a(1+xt)+sx\right) t^r(ct+1)^qdt +\left( (-1)^r(1-c)^q(1-x)+(1+c)^q(1+x)\right).
$$
Finally, \eqref{cscS} is then equivalent to $A_1\gz+A_2$ being a constant multiple of $c\gz+1$, which, following Section 3.1 of \cite{BHLT23}, is equivalent to the equation
\begin{equation}\label{cscS2}
\alpha_{1,-5}\beta_{0,-4}-\alpha_{0,-5}\beta_{1,-4}=0.
\end{equation}

Due to Corollary 7.3 of \cite{ApCaLe21} it follows that a ray in the Sasaki cone determined by $c\in (-1,1)$ has extremal Sasaki metrics of any type if and only if the corresponding $F(\gz)$ as defined by \eqref{thisisF} satifies {\em (i)} of \eqref{positivityF}. In that case, the ray has extremal Sasaki metrics which up to scale arise from the construction above.
Furthermore, by Theorem 11.2.8 in \cite{BG05}, we know that if a ray admits a constant scalar curvature Sasaki metric, then the Futaki invariant must vanish and any extremal Sasaki metric for this ray must have constant scalar curvature. As a consequence, \eqref{cscS2} is a necessary
condition for the ray determined by $c\in (-1,1)$ to admit a constant scalar curvature Sasaki metrics of any kind. 

As a slightly degenerate special case of the last statement in Theorem 3.1 of \cite{ApMaTF18} we now have the following lemma:
\begin{lemma}\label{Fispositive}
With $F(\gz)$ given by \eqref{thisisF} the positivity condition {\em (i)} of \eqref{positivityF} holds.
\end{lemma}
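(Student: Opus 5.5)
Set $P(\gz):=F(\gz)/(c\gz+1)^4$. Since $c\gz+1>0$ on $[-1,1]$, condition (i) of \eqref{positivityF} holds for $F$ exactly when $P>0$ on $(-1,1)$, so I will prove the latter. By \eqref{ode}, $P''(\gz)=Q(\gz)$ on $[-1,1]$. By (ii) and (iii) of \eqref{positivityF}, which hold by construction of $A_1,A_2$, we have $P(\pm1)=0$, $P'(-1)=2(1-x)/(1-c)^4>0$ and $P'(1)=-2(1+x)/(1+c)^4<0$. Write $Q(\gz)=(c\gz+1)^{-6}R(\gz)$ with
$$R(\gz)=(c\gz+1)^2\bigl(2a(1+x\gz)+2sx\bigr)-(A_1\gz+A_2)(1+x\gz).$$
This is a polynomial of degree at most $3$ in $\gz$. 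The plan is a root-counting argument in the spirit of \cite{ApMaTF18}: show that $P$ cannot have a zero in $(-1,1)$.

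Suppose $P$ vanished somewhere in $(-1,1)$. We know $P'(-1)>0$ and $P'(1)<0$, so $P$ is positive just inside both endpoints. Any interior zero therefore forces $P$ to have at least two further critical points beyond a single interior maximum. A sign-change count then shows $P'$ has at least three zeros in $(-1,1)$, so $P''=Q$ changes sign at least three times there. (If $P$ merely touches zero at an interior point, the count is the same after a perturbation argument, or by counting multiplicity.) Since $(c\gz+1)^{-6}>0$, this means $R$ has at least three sign changes in $(-1,1)$. A cubic has at most three roots, so $R$ would have exactly three simple roots in $(-1,1)$, all inside the interval.

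The main obstacle is to rule out this last configuration, and for that I will use the behaviour of $R$ at infinity or at a point outside $[-1,1]$. The leading coefficient of $R$ is $2ac^2x$, which is nonnegative. The only other cubic-order term would come from $A_1\gz\cdot x\gz$, and that is merely quadratic, so for $c\neq0$ the sign of $R(\gz)$ as $\gz\to+\infty$ is positive. Next I would evaluate $R$ at $\gz=-1/x<-1$, where the last term vanishes:
$$R(-1/x)=(1-c/x)^2\,2sx\ge 0.$$
Together with the positive leading coefficient, this gives $R(-1/x)\ge0$ and $R(+\infty)>0$. A cubic with three simple roots in $(-1,1)$ and positive leading coefficient is negative on $(-\infty,\text{smallest root})$, which contains $-1/x$. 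This contradicts $R(-1/x)\ge0$ unless $R(-1/x)=0$, which happens only when $c=x$. In that case $-1/x$ would be a fourth root, which is impossible for a nonzero cubic.

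The degenerate cases remain. When $c=0$, $R$ has degree at most $2$, so the count of three sign changes is immediately impossible. I would also handle the possibility that $R\equiv0$ identically, which would force $P$ to be affine, contradicting $P(\pm1)=0$ with $P'(-1)\ne0$. Checking that the sign-change counting is rigorous, including tangential zeros, is routine once the cubic bound is in place.
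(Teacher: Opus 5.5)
Your setup (dividing by $(c\gz+1)^4$, the boundary data, the cubic $R$, the value $R(-1/x)=2sx(1-c/x)^2\ge 0$ and the positive leading coefficient $2ac^2x$) is the same as the paper's, but the counting step has a genuine gap. Suppose $P'(-1)>0$, $P'(1)<0$ and $P\le 0$ at some interior point. The mean value theorem then gives points $-1<p_1<p_2<p_3<1$ with $P''(p_1)<0$, $P''(p_2)>0$ and $P''(p_3)<0$. Three zeros of $P'$ therefore force, by Rolle, only \emph{two} sign changes of $P''$ inside $(-1,1)$, hence only two of $R$, not three. The inference genuinely fails. Take $P(\gz)=(1-\gz^2)(\gz^2-\epsilon)$ with $0<\epsilon<1$: it has $P(\pm1)=0$, $P'(-1)>0>P'(1)$ and $P(0)<0$, yet $P''=2(1+\epsilon)-12\gz^2$ changes sign exactly twice. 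So the configuration you go on to exclude (three roots of $R$ in $(-1,1)$) is not the one your hypothesis produces. Your one-line treatment of $c=0$ fails for the same reason, since a quadratic can have two roots in $(-1,1)$.

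The fix uses the two facts you already have, placed differently, and this is exactly the paper's argument. The leading coefficient is positive and $R(p_3)<0$, so $R$ has a root in $(p_3,+\infty)$. Also $R(-1/x)\ge 0$ with $-1/x<-1<p_1$ and $R(p_1)<0$, so $R$ has a root in $[-1/x,p_1)$. Together with the roots in $(p_1,p_2)$ and $(p_2,p_3)$, this gives four distinct roots of a nonzero cubic, a contradiction. The case $c=x$ needs no separate treatment, since the root at $-1/x$ is still distinct from the others. The paper phrases this as roots in $(\gz_{\text{max}1},\gz_{\text{min}})$, $(\gz_{\text{min}},\gz_{\text{max}2})$, $(\gz_{\text{max}2},+\infty)$ and $(-\infty,-1/x]$. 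For $c=0$, $R$ has degree at most $2$ and $R(-1/x)=2sx>0$. The sign pattern $+,-,+,-$ at $-1/x,p_1,p_2,p_3$ then gives three roots of a nonzero polynomial of degree at most two, again impossible.
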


\begin{proof}
This follows essentially from the proof of Theorem 3.1 of \cite{ApMaTF18}, which in turn is in depth to the the root counting argument given by Hwang \cite{Hwa94} and Guan \cite{Gua95} (see also Proposition 11 in \cite{ACGT08}). To be more specific, here is the argument:

First notice that the condition {\em (i)} of \eqref{positivityF}  is equivalent to the condition
\begin{equation}\label{positivityG}
 G(\gz)>0,\quad \text{for }-1<\gz <1,
 \end{equation}
 where
$G(\gz)=\frac{F(\gz)}{(c\gz+1)^4}$. It is also easy to see that, {\em (ii)} and {\em (iii)} of \eqref{positivityF} together imply that
\begin{align}
\label{boundaryG}
(i)\ G(\pm 1) = 0,\quad
(ii)\ G'(-1) = \frac{2(1- x)}{(1-c)^4} >0,\quad
(iii)\ G'(+1) = \frac{-2(1+x)}{(1+c)^4}<0.
\end{align}
Assume first that $c\neq 0$. Since $(c\gz+1)>0$ for $-1\leq \gz \leq 1$, equation \eqref{ode} tells us that for $-1\leq \gz \leq 1$, the sign of $G''(\gz)$ equals the sign of $P(\gz)$, where
$P(\gz)$ is the cubic 
$$P(\gz)=(c\gz+1)^2\left(2a(1+x\gz)+2sx\right)-(A_1\gz + A_2)(1+x\gz)$$
with positive leading term, $2axc^2\gz^3$ 
Notice now that $P(-\frac{1}{x})\geq 0$ since $2sx>0$ and so $P(\gz)$ must have a root in the interval $(-\infty,-\frac{1}{x}]\subset (-\infty,-1)$.

Assume by contradiction that \eqref{positivityG} fails. Then \eqref{boundaryG} would imply that $G$  has at least two relative maxima and one minimum;
$-1<\gz_{\text{max}1}<\gz_{\text{min}}<\gz_{\text{max} 2}<1$. This implies that $P(\gz)$ would also have at least one root in each of the intervals $(\gz_{\text{max}1},\gz_{\text{min}})$,
$(\gz_{\text{min}},\gz_{\text{max} 2})$, and $(\gz_{\text{max} 2},+\infty)$. That would give us at least four distinct roots for the cubic $P(\gz)$, which is a contradiction. 
Finally, the $c=0$ case (classical extremal) is even simpler, well-known (\cite{Hwa94}, \cite{Gua95}), and is left for the reader.

In conclusion, \eqref{positivityG} must hold and hence {\em (i)} of \eqref{positivityF}  is proven.
\end{proof}

\begin{lemma}\label{cscexistence}
For any choice of values $k\in\bbz^+$ (yielding $s=2/k$), $a\in \bbq^+$, $x\in (0,1)\cap\bbq$, there is at least one value $c\in (-1,1)$ such that 
\eqref{cscS2} holds true. Furthermore, given values $k\in\bbz^+$ and $x\in (0,1)\cap\bbq$, there exist $N>0$ such that $\forall a \in (N,+\infty)\cap \bbq$,
there are at least three values $c\in (-1,1)$ such that 
\eqref{cscS2} holds true.
\end{lemma}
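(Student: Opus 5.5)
The plan is to study the function
$$h(c):=\alpha_{1,-5}\beta_{0,-4}-\alpha_{0,-5}\beta_{1,-4},\qquad c\in(-1,1),$$
which is real analytic in $c$ on $(-1,1)$ because $ct+1>0$ for $t\in[-1,1]$. Both claims will follow from the intermediate value theorem once I know the sign of $h$ near $c=\pm1$ and, for the second claim, near $c=0$.

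\textbf{First claim.} The first step is an asymptotic analysis of $h(c)$ as $c\to 1^-$ and as $c\to -1^+$. Put $\varepsilon=1-c$. The singular contributions come from $t$ near $-1$, where $ct+1=\varepsilon+c(t+1)$. Expanding, I expect
$$\alpha_{r,-5}\sim(-1)^r(1-x)\,\varepsilon^{-4}/4+O(\varepsilon^{-3}),$$
$$\beta_{r,-4}\sim(-1)^r\bigl[(1-x)\varepsilon^{-4}+\tfrac13\bigl(a(1-x)+sx\bigr)\varepsilon^{-3}\bigr]+\dots$$
The leading $\varepsilon^{-8}$ terms cancel in $h$ because of the common factor $(-1)^r$. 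This cancellation is the main obstacle: I have to carry the expansions one or two orders further, keeping the $\varepsilon^{-3}$ parts of $\alpha_{r,-5}$ and $\beta_{r,-4}$, which come from the factor $t^r=(-1+(t+1))^r$ and from $(1+xt)$. The aim is to show that $\varepsilon^{7}h(c)$ has a nonzero limit whose sign is independent of $a$, $s$, and $x\in(0,1)$.

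I will then do the same computation at $c\to-1^+$, with $\delta=1+c$ and the singularity now at $t=1$. The substitution $t\mapsto -t$ turns $\alpha_{r,q}(c,x)$ into $(-1)^r\alpha_{r,q}(-c,-x)$, and $\beta$ transforms similarly apart from the sign of the $sx$ term. The expected outcome is that $\delta^7h(c)$ tends to a nonzero limit of the opposite sign; morally, $h$ behaves like the Futaki-type obstruction, which changes sign across the cone. Given opposite signs at the two ends, the intermediate value theorem gives a root $c\in(-1,1)$.

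\textbf{Second claim.} Divide by $a$ and let $a\to\infty$. Then $h/a\to h_\infty$ uniformly on compact subsets of $(-1,1)$, where
$$h_\infty(c)=\alpha_{1,-5}\gamma_0-\alpha_{0,-5}\gamma_1,\qquad \gamma_r=\int_{-1}^1(1+xt)\,t^r(ct+1)^{-4}\,dt.$$
At $c=0$ we have $\alpha_{1,-5}=\gamma_1=2x/3$ and $\alpha_{0,-5}=\gamma_0=2$, so $h_\infty(0)=0$.

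Next I compute $h_\infty'(0)$ by differentiating under the integral sign. I expect it to be nonzero, with a sign such that $h_\infty$ has the same sign as $h$'s $c\to-1$ limit just to the right of $0$, and the same sign as $h$'s $c\to1$ limit just to the left of $0$. In other words, the crossing at $0$ runs opposite to the endpoint signs.

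Then fix $0<\eta$ small, with $h_\infty(\pm\eta)\neq0$ of the appropriate signs. By uniform convergence, for $a>N$ the function $h$ has these same signs at $\pm\eta$. The endpoint signs established in the first part do not depend on $a$; I need to double-check this, since the $a$-terms enter only at order $\varepsilon^{-3}$ in $\beta$. Combining the two sets of signs gives three sign changes: one in $(-1,-\eta)$, one in $(-\eta,\eta)$, and one in $(\eta,1)$. This yields at least three roots.

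If instead the sign of $h_\infty'(0)$ turns out to agree with the endpoint pattern, I would look for the two extra roots of $h_\infty$ by examining its behaviour near the endpoints, where it has the same leading asymptotics.
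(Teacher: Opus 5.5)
Your strategy is sound and every sign you predict is correct. As written, though, the proposal is a plan rather than a proof. The two facts that carry the whole lemma are only ``expected'': the sign of the leading singular coefficient of the left side of \eqref{cscS2} at $c=\pm1$, and the sign of $h_\infty'(0)$. You even keep a fallback in case the second goes the other way. Both close in a few lines.

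\emph{Endpoints.} Put $S_\alpha=\alpha_{0,-5}+\alpha_{1,-5}$ and $S_\beta=\beta_{0,-4}+\beta_{1,-4}$. Then $h=S_\alpha\beta_{0,-4}-\alpha_{0,-5}S_\beta$, which disposes of the $\varepsilon^{-8}$ cancellation at once.
\begin{itemize}
\item $S_\alpha=\int_{-1}^1(1+t)(1+xt)(ct+1)^{-5}\,dt=\frac{1-x}{12}\varepsilon^{-3}+O(\varepsilon^{-2})$, using $\int_0^\infty v(1+v)^{-5}dv=\frac1{12}$.
\item In $S_\beta$ the $(1-c)^{-4}$ boundary terms cancel and the integrand picks up the factor $1+t$, so $S_\beta=O(\varepsilon^{-2})$.
\item $\beta_{0,-4}=(1-x)\varepsilon^{-4}+O(\varepsilon^{-3})$ and $\alpha_{0,-5}=O(\varepsilon^{-4})$.
\end{itemize}
Hence $\varepsilon^7h\to\frac{(1-x)^2}{12}>0$, with $a$ and $s$ indeed dropping out. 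Your $t\mapsto-t$ symmetry gives $h(c;x,s)=-h(-c;-x,-s)$. The computation above holds for every $x\in(-1,1)$ and every $s$, so it yields $\delta^7h\to-\frac{(1+x)^2}{12}<0$.

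\emph{Interior.} The function $h$ is exactly affine in $a$, $h=a\,h_\infty+h^{(0)}$, so no uniform-convergence argument is needed. Set $m_r=\int_{-1}^1t^r(1+xt)\,dt$. Differentiating under the integral gives $h_\infty'(0)=m_1^2-m_0m_2=-\frac{4(3-x^2)}{9}<0$; this is negative by Cauchy--Schwarz for the positive weight $1+xt$. So $h_\infty(-\eta)>0>h_\infty(\eta)$, and the crossing at $0$ runs opposite to the endpoint pattern, as you hoped. Your three-interval argument therefore goes through, and the fallback is never needed.

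\emph{Comparison with the paper.} The paper reaches the same sign pattern $-,+,-,+$ by explicit computation. It writes the left side of \eqref{cscS2} in closed form as $\frac{4h(c)}{9(1-c^2)^7}$, where $h$ is an explicit quintic (not your $h$). It reads off $h(\pm1)=\pm24(1\mp x)^2$ and computes $\lim_{a\to\infty}h(\mp\frac12)/a=\frac{\pm33+24x\mp3x^2}{32}$, using the fixed points $\pm\frac12$ where you use $\pm\eta$. The numbers agree with yours: $\frac{4\cdot24}{9\cdot2^7}=\frac1{12}$, and $\frac49$ times the coefficient $x^2-3$ of $ac$ in the quintic is your $h_\infty'(0)$.

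What each route buys:
\begin{itemize}
\item Yours avoids the closed form and shows structurally why the endpoint signs are independent of $a$ and $s$.
\item Yours also runs verbatim with exponents $-p,-(p-1)$: the endpoint coefficient becomes $\frac{(1-x)^2}{(p-1)(p-2)}$ and $h_\infty'(0)$ is unchanged. So it gives both root-counting statements for \eqref{cscS2moregenerally} as well.
\item The paper's explicit quintic is what gets reused later, e.g.\ $h(0)=3x(sx-2)<0$ in Remark \ref{cPositive} and the exact root counts in the subsequent examples.
\end{itemize}
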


\begin{proof}
A direct calculation shows that the left hand side of \eqref{cscS2} equals $\frac{4 h(c)}{9(1-c^2)^7 }$, where
$h(c)$ is the following polynomial.
\begin{equation}\label{h}
\begin{aligned}
h(c)  &=  3 x (s x-2)+ \left(21 - 3 a - 3 s x + 3 x^2 + a x^2\right)c\\
&+ 4 x (a-9 - s x)c^2+ 4 (a + s x + 6 x^2 - a x^2)c^3\\
&+  x (sx-6 - 4 a )c^4+\left(3 - a - s x - 3 x^2 + 3 a x^2\right)c^5. 
\end{aligned}
\end{equation}
Since $h(-1)=-24(1+x)^2<0$ and $h(1)=24(1-x)^2>0$, the polynomial clearly has at least one root $c\in (-1,1)$. The last part of the lemma follows from the simple fact that
$$\displaystyle\lim_{a\rightarrow +\infty}\frac{h(-1/2)}{a}=\frac{(33 + 24 x - 3 x^2)}{32} >0\quad \text{and} \quad  
\displaystyle\lim_{a\rightarrow +\infty}\frac{h(1/2)}{a}=\frac{(-33 + 24 x + 3 x^2)}{32}  <0.$$
Thus, for $a\in \bbq^+$ sufficiently large, $h(c)$ has at least one root in each of the intervals $(-1,-1/2)$, $(-1/2,1/2)$, and $(1/2,1)$.
\end{proof}

With Lemmas \ref{Fispositive} and \ref{cscexistence} established we see that for any choice of values $k\in\bbz^+$, $a\in \bbq^+$, $x\in (0,1)\cap\bbq$, and an appropriate rescale of the K\"ahler structure $(g,\omega)$, 
defined by \eqref{productmetric},
such that the K\"ahler class is in $H^2(N_k,\bbz)$, the corresponding Boothby-Wang constructed Sasaki manifold has a sub-cone (parametrized by $c\in (-1,1)$ in the Sasaki cone that is exhausted by extremal Sasaki structures and where at least one (and, for certain polarizations, at least three) of these  extremal structures have constant scalar curvature. Hence the answer to Question \ref{question2} is indeed ``yes.'' We conclude with the following result.

\begin{proposition}\label{affirmative}
For any polarization of $N_k= \bbc\bbp^1 \times \bbf_k$, where $\bbf_k$ is the $k^{th}$ Hirzebruch surface, the corresponding Sasaki manifold over $N_k$ admits a Sasaki structure with constant scalar curvature somewhere in the Sasaki cone. Furthermore, for each $N_k$, there exist polarizations such that the corresponding Sasaki cone contains at least three constant scalar curvature Sasaki rays. 
\end{proposition}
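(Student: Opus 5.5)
The proof assembles Lemmas \ref{Fispositive} and \ref{cscexistence} with the Apostolov--Calderbank correspondence (Lemma \ref{ac}). First, every polarization of $N_k$ arises from the product metrics \eqref{productmetric}. By the K\"unneth formula, any K\"ahler class on $N_k=\bbc\bbp^1\times\bbf_k$ is a product class. Up to an overall positive rescale it is the class of $\omega_a+\omega_x$ for some $a>0$ and $x\in(0,1)$, using \eqref{class1} for the $\bbf_k$ factor. Rationality of the class forces $a\in\bbq^+$ and $x\in(0,1)\cap\bbq$. After rescaling to a primitive integral class, with coprime $(l_1,l_2)$, the Boothby--Wang construction gives the regular Sasaki manifold over $N_k$, up to isotopy. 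The transverse K\"ahler structure may be taken to be a rescale of \eqref{productmetric}, since all choices in the class give isotopic Sasaki structures. Rescaling does not affect the $(f,p)$-extremal condition or condition \eqref{cscS} up to constants.

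Second, fix such $a,x$ and consider the positive Killing potentials $f=c\gz+1$, $c\in(-1,1)$, on $N_k$. Each of these determines a Reeb vector field $\xi_f$ in the Sasaki cone, lying in the $\gt^+(M_2)$ subcone. Solve \eqref{ode} by \eqref{thisisF}, with $A_1,A_2$ fixed by the stated linear system. Then $F$ satisfies (ii) and (iii) of \eqref{positivityF}, and by Lemma \ref{Fispositive} also (i). Thus $g_a+g_x$, with $g_x$ built from this $F$, is a genuine admissible K\"ahler metric in the fixed class. It is $(c\gz+1,5)$-extremal, and by Lemma \ref{ac} the ray of $\xi_f$ carries an extremal Sasaki metric. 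This metric has constant scalar curvature exactly when \eqref{cscS} holds, i.e., when $A_1\gz+A_2$ is proportional to $c\gz+1$, which is equivalent to \eqref{cscS2}. By the first part of Lemma \ref{cscexistence}, some $c\in(-1,1)$ satisfies \eqref{cscS2}, and this gives the first assertion.

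For the second assertion, fix $k$ and any rational $x\in(0,1)$. Choose $a\in\bbq^+$ larger than the $N$ of Lemma \ref{cscexistence}. This gives at least three distinct values $c\in(-1,1)$ satisfying \eqref{cscS2}, hence three cscS rays. These rays are distinct because different $c$ give non-proportional Killing potentials $c\gz+1$. The Reeb fields $\xi_f$ therefore differ by non-collinear elements of the torus Lie algebra: $\xi_f$ is determined by $f$, and proportional Reeb fields would require proportional potentials.

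The main point needing care is the justification that every polarization is realized by some rational $(a,x)$ after rescaling, together with the isotopy-invariance needed to identify the Sasaki cone of the Boothby--Wang bundle with the one built from \eqref{productmetric}. I would handle both via \eqref{class1}, the K\"unneth decomposition, and the footnoted convention that everything is up to isotopy.
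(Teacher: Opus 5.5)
Your proposal is correct and follows the paper's own argument essentially step by step. You realize every polarization by a rational product class $[\omega_a+\omega_x]$, solve \eqref{ode} via \eqref{thisisF}, get positivity from Lemma \ref{Fispositive} and extremality from Lemma \ref{ac}, and read off one cscS ray (and three, for $a$ large) from the roots of $h(c)$ in Lemma \ref{cscexistence}. The one point left implicit is the restriction to $k>0$, needed so that $s=2/k$ makes sense; the paper also restricts to $k>0$ and dismisses $k=0$ for the first assertion because the regular ray over $(\bbc\bbp^1)^3$ with a product of round metrics is already cscS.
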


\begin{remark}
The findings of multiple constant scalar curvature Sasaki metrics is a result similar to that in Theorem 1.3 in \cite{BoTo14a}. Note also that such a phenomenon was first discovered by Legendre \cite{Leg10} in the case of certain $5$-dimensional Sasaki manifolds corresponding to quadrilateral toric structures.
\end{remark}

\subsection{Related cases}
\subsubsection{Dimension 7}
We can generalize part of the work in Section \ref{answerquestion2} by replacing $\bbc\bbp^1$ in $N_k=\bbc\bbp^1\times \bbf_k$ by a compact Riemann surface, $\Sigma_{\gg_1}$, of arbitrary genus $\gg_1$and $\bbf_k$ by any admissible ruled surface, $\bbs_k=\bbp(\calo \oplus L_k)\rightarrow \Sigma_{\gg_2}$, where $\Sigma_{\gg_2}$ is any compact Riemann surface of genus $\gg_2 $ and $L_k$ is a holomorphic line bundle of degree $k$.
This corresponds to allowing $a$ to take any rational value and setting $s=\frac{2(1-\gg_2)}{k}$ in Section \ref{admreview}. 
The first part of Lemma \ref{cscexistence} holds true, regardless of the values of $s$ and $a$, whereas Lemma \ref{Fispositive} is a bit more complicated. 

\begin{example}
Let us first keep $\bbf_k$ as is, let $\Sigma_{\gg_1}$ be a 2 dimensional torus $T^2$, and let $g_a$ be a scalar flat K\"ahler metric on $T^2$. Then $s=2/k>0$  and $a=0$. In this case,
Lemma \ref{Fispositive} still holds true. The proof of this is almost the same as above, but now $P(\gz)$ is a second order polynomial. We still have that $P(\frac{-1}{x}) \geq 0$ and if we assume by contradiction that \eqref{positivityG} fails we still have that \eqref{boundaryG} would imply that $G$  has at least two relative maxima and one minimum;
$-1<\gz_{\text{max}1}<\gz_{\text{min}}<\gz_{\text{max} 2}<1$. This implies that $P(\gz)$ would also have at least one root in each of the intervals $[\frac{-1}{x}, \gz_{\text{max}1})$,
$(\gz_{\text{max}1},\gz_{\text{min}})$,
$(\gz_{\text{min}},\gz_{\text{max} 2})$. This is obviously impossible for a second order polynomial. We thus have the following result.
\end{example}

\begin{proposition}\label{affirmative2}
For any polarization of $T^2 \times \bbf_k$, where $\bbf_k$ is the $k^{th}$ Hirzebruch surface, the corresponding Sasaki manifold over $T^2 \times \bbf_k$ admits a Sasaki structure with constant scalar curvature somewhere in its Sasaki cone. \end{proposition}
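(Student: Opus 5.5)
We would prove Proposition \ref{affirmative2} by repeating the argument leading to Proposition \ref{affirmative}, with $\bbc\bbp^1$ replaced by $T^2$ carrying a flat metric, so that $a=0$ and $s=2/k>0$. First we fix the polarization. By K\"unneth every K\"ahler class on $T^2\times\bbf_k$ is a product class. Up to an overall rational rescale, every primitive polarization is therefore represented by a product metric $g=g_0+g_x$, with $g_0$ flat on $T^2$ and $g_x$ admissible on $\bbf_k$ as in \eqref{metric} for some $x\in(0,1)\cap\bbq$. The area of $T^2$ is a free rational parameter that does not enter the scalar curvature, and the rescaling is then absorbed into $(l_1,l_2)$. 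The Boothby--Wang Sasaki manifold over this class is the one in the statement. As before, we take the positive Killing potentials $f=c\gz+1$ with $c\in(-1,1)$, pulled back to the product.

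Next we redo the computation of $Scal_{c\gz+1,5}(g)$ using the product rules listed in Section \ref{transition}. The only change is that the term $2a$ drops out. We obtain the ODE \eqref{ode} with $a=0$, its unique solution \eqref{thisisF} satisfying conditions (ii) and (iii) of \eqref{positivityF}, and the constants $A_1,A_2$ determined by the same linear system with $a=0$. By Lemma \ref{ac} and Corollary 7.3 of \cite{ApCaLe21}, the ray determined by $c$ carries an extremal Sasaki metric exactly when $F>0$ on $(-1,1)$. The positivity needed here is the content of the example preceding the statement. With $a=0$, $P(\gz)=(c\gz+1)^2\,2sx-(A_1\gz+A_2)(1+x\gz)$ is at most quadratic and satisfies $P(-1/x)=2sx(1-c/x)^2\ge 0$. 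If $G=F/(c\gz+1)^4$ failed to be positive, then \eqref{boundaryG} would force three sign changes of $G''$. Together with the control at $-1/x$, this would give too many roots for a quadratic. Hence Lemma \ref{Fispositive} holds in this setting, and every ray with $c\in(-1,1)$ is extremal.

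Finally, the ray is cscS exactly when \eqref{cscS2} holds. The left side of \eqref{cscS2} equals $\frac{4h(c)}{9(1-c^2)^7}$, where $h$ is the polynomial \eqref{h} evaluated at $a=0$. The endpoint values $h(-1)=-24(1+x)^2<0$ and $h(1)=24(1-x)^2>0$ do not depend on $a$, so the intermediate value theorem gives a root $c_0\in(-1,1)$. We should double-check that the expression for $h$ was derived without assuming $a>0$; it appears to be polynomial in $a$, so specializing to $a=0$ is legitimate. The ray of $f=c_0\gz+1$ is then extremal with vanishing Futaki invariant, hence cscS by Theorem 11.2.8 of \cite{BG05}.

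We expect the positivity step to be the main obstacle. In particular, the root count must be checked carefully when $P$ degenerates, for example to a linear polynomial or one with a double root, or when $P(-1/x)=0$, which happens at $c=x$. If $P(-1/x)=0$ and $-1/x<-1$, that root is still outside $[-1,1]$, so the count of roots in $(-1,1)$ still exceeds what a quadratic allows. If needed, we will treat the case where $P$ is linear separately by the same convexity argument.
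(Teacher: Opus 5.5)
Your proposal is correct and is essentially the paper's own argument: with $a=0$, the polynomial $P$ has degree at most two and $P(-1/x)\ge 0$, so the root count of Lemma \ref{Fispositive} gives positivity of $F$ for every $c\in(-1,1)$, and $h(-1)<0<h(1)$ (independent of $a$ and $s$) yields a cscS ray. One small correction: \eqref{boundaryG} forces three sign changes of $G'$ (not of $G''$), hence $G''$ takes the values $-,+,-$ in order in $(-1,1)$, and the third root of $P$ comes from $P(-1/x)\ge 0$; since $-1/x<-1$, this gives three distinct roots of a nonzero polynomial of degree at most two in every case, so the linear, double-root and $c=x$ cases need no separate treatment.
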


\begin{remark}\label{cPositive}
Before embarking on the next examples, note that in the proof of Lemma \ref{cscexistence}, we easily see that $h(0)=3x(sx-2)$ and since $sx<2$ for any value of $\gg_2 \in \bbz^{>0}$, this means that $h(0)<0$. Thus, since $h(1)>0$, we know that $h(c)$ has at least one positive root $c\in (0,1)$.
\end{remark}

\begin{example}\label{cpos}
Assume now that $\gg_2 \in \bbz^{>0}$ is arbitrary,  $a>0$ (so, $\Sigma_{\gg_1}=\bbc\bbp^1$), and $c\in (0,1)$ is a positive solution of $h(c)=0$. By Remark \ref{cPositive}, we know such a solution exist. We now break our discussion into 2 cases $c\neq x$ and $c=x$.

In the first case we assume that $c\neq x$. Then $h(c)=0$ can be rewritten as the equation
$$s=\frac{3 \left(c^5 x^2-c^5+2 c^4 x-8 c^3 x^2+12 c^2 x-c x^2-7 c+2 x\right) +c(1-c^2)(3 - c^2 - 4 c x - x^2 + 3 c^2 x^2)a}{\left(1-c^2\right)  \left(3-c^2\right) x (x-c)}$$ 
and then
$F(\gz)$ given by \eqref{thisisF} may be written as follows:
\begin{equation}\label{explicitF}
F(\gz)=\frac{(1-\gz^2)(1+c\gz) g(x,\gz)}
{2(1-c^2)(3-c^2)},
\end{equation}
where
$$
\begin{array}{ccl}
g(x,\gz)& = & 2 (1 - x) \left(3 + c^2 - 3 c \gz + c^3 \gz - 2 c^2 \gz^2\right)\\
\\
&+ & x(1 - c) \left(1 + 
   \gz\right) \left((3(1 - c) + a c(1+c) ) (1 - \gz) + (3 - c^2) (1 + \gz)\right)
   \end{array} $$
 is linear in $x$. 
 It is now a straightforward exercise to check that $g(0,\gz)$ and $g(1,\gz)$ are both positive for $-1<\gz <1$ and thus
 $g(x,\gz)$ is positive for $0<x<1$ and $-1<\gz <1$. This proves that $F(\gz)$ satisfies {\em (i)} of \eqref{positivityF}.
 
For the second case we assume that $c=x$ is a solution of $h(c)=0$. It is easy to see that then we must have $a=\frac{5-x^2}{1-x^2}$.
In that case, \eqref{thisisF} simplifies to
$F(\gz)=\frac{(1-z^2)(1-x z)(1+x z)^2}{1-x^2}$
which clearly satisfies {\em (i)} of \eqref{positivityF}. 

We can now conclude with the following existence result:
\begin{proposition}\label{affirmative3}
Let $N = \bbp(\calo \oplus L)\rightarrow \Sigma_{\gg}$ be a ruled surface over a compact Riemann surface, $\Sigma_{\gg}$.
For any value of $\gg \in \bbz^{>0}$ and any choice of polarization of $\bbc\bbp^1\times N$, the corresponding Sasaki manifold over $\bbc\bbp^1\times N$ admits a Sasaki structure with constant scalar curvature somewhere in the Sasaki cone. 
\end{proposition}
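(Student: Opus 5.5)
The plan is to reduce the statement to the computations already carried out in Example \ref{cpos}, together with Lemma \ref{ac} and the criterion \eqref{cscS}.

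\emph{Step 1: setting up the product metric.} Every polarization of $\bbc\bbp^1\times N$ is a product class by the K\"unneth formula, since $H^1(\bbc\bbp^1)=0$. Up to an overall rational rescale, it is therefore represented by a product K\"ahler metric $g=g_a+g_x$. Here $g_a$ is the round metric on $\bbc\bbp^1$ with scalar curvature $2a$, $a\in\bbq^+$, and $g_x$ is an admissible metric \eqref{metric} on $N=\bbs_k$ with $x\in(0,1)\cap\bbq$ and $s=2(1-\gg)/k$. If $\deg L=0$, the surface $N$ is not covered by the admissible ansatz with $k>0$. I would treat that case separately: either use a cscK product directly, or replace $L$ by $L^{-1}$ and reduce to $k>0$ using $\bbp(\calo\oplus L)\cong\bbp(\calo\oplus L^{-1})$. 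The Boothby--Wang Sasaki manifold over this rescaled polarization is the join in question. By Lemma \ref{ac} and \eqref{cscS}, a cscS ray is obtained from a positive Killing potential $f=c\gz+1$, $c\in(-1,1)$, satisfying two conditions:
\begin{itemize}
\item[(a)] the function $F$ from \eqref{thisisF} satisfies (i) of \eqref{positivityF}, so that an $(f,5)$-extremal admissible metric actually exists;
\item[(b)] $h(c)=0$, which is equivalent to \eqref{cscS2}.
\end{itemize}

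\emph{Step 2: choosing $c$.} By Remark \ref{cPositive}, since $\gg\ge1$ we have $sx\le 0<2$, so $h(0)=3x(sx-2)<0$ while $h(1)=24(1-x)^2>0$. Hence there is a root $c\in(0,1)$. I fix such a positive root, because positivity of $c$ is what Example \ref{cpos} needs.

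\emph{Step 3: positivity of $F$.} This is the heart of the argument, and it is already done in Example \ref{cpos}. I split into two cases.
\begin{itemize}
\item If $c\neq x$, I solve $h(c)=0$ for $s$ and substitute into \eqref{thisisF}. This yields the factorized form \eqref{explicitF}, $F=(1-\gz^2)(1+c\gz)g(x,\gz)/(2(1-c^2)(3-c^2))$. Since $g$ is affine in $x$, it suffices to check $g(0,\gz)>0$ and $g(1,\gz)>0$ on $(-1,1)$.
 \begin{itemize}
 \item $g(0,\gz)=2(3+c^2-3c\gz+c^3\gz-2c^2\gz^2)$. This is concave in $\gz$, with values $8(1\pm\dots)$ at the endpoints. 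Explicitly, $g(0,\pm1)/2=3-c^2\mp3c\pm c^3$, which factors as $(1\mp c)(3\pm\dots)$. One checks it is positive at both ends for $c\in(0,1)$, and concavity then gives positivity throughout.
 \item $g(1,\gz)$ consists of nonnegative terms, using $a>0$ and $c\in(0,1)$, and at least one of them is strictly positive on $(-1,1)$.
 \end{itemize}
\item If $c=x$, then $h(x)=0$ forces $a=(5-x^2)/(1-x^2)$, and $F=(1-\gz^2)(1-x\gz)(1+x\gz)^2/(1-x^2)$, which is visibly positive on $(-1,1)$.
\end{itemize}

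\emph{Step 4: conclusion.} With (a) and (b) in hand, Corollary 7.3 of \cite{ApCaLe21} together with \eqref{cscS} gives a constant scalar curvature Sasaki metric, up to isotopy, on the ray determined by $c$. This proves the proposition.

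The main obstacle is Step 3 in the case $c\neq x$. The root-counting argument of Lemma \ref{Fispositive} fails here, because $s\le0$ destroys the sign $P(-1/x)\ge0$. One is therefore forced into the explicit factorization \eqref{explicitF}, and must verify carefully that eliminating $s$ via $h(c)=0$ really produces that closed form and that the endpoint-positivity checks hold uniformly for $c\in(0,1)$.
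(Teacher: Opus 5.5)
Your proposal is correct and follows the paper's own argument in Example~\ref{cpos} essentially step for step: a root $c\in(0,1)$ of $h$ from $h(0)<0<h(1)$, the split into $c\neq x$ and $c=x$, and positivity of $F$ via the factorization \eqref{explicitF} together with linearity of $g(x,\gz)$ in $x$. Two small additions to your write-up: the placeholder endpoint values should read $g(0,\pm1)=2(1\mp c)(3-c^2)>0$, and your separate treatment of $\deg L=0$ is an extra case that the paper tacitly skips by assuming $k>0$.
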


 \end{example}
 
\begin{example}\label{quasiregularex}
Let us focus a bit more on the second case in Example \ref{cpos}.
Recall from above that $c=x$ is a solution of $h(c)=0$ if and only if $a=\frac{5-x^2}{1-x^2}$. In particular, it follows automatically that $a>0$ and so $\Sigma_{\gg_1}$ must be $\bbc\bbp^1$.
We saw above that in this case \eqref{thisisF} simplifies to
$F(\gz)=\frac{(1-z^2)(1-x z)(1+x z)^2}{1-x^2}$
and since this satisfies {\em (i)} of \eqref{positivityF}, we get a family of polarizations of $\bbc\bbp^1\times \bbs_k$ such that the corresponding Sasaki manifold has an explicit quasi-regular cscS ray.

We can explore this family a little further in two directions: 
\begin{enumerate}
\item If $\gg_2$ and $k$ are such that $5k<3(\gg_2-1)$ (in particular $\gg_2\geq 3$) and $x=\frac{k}{\gg_2-1}$, then $0<x<\frac{3}{5}$ and thus
$\frac{-5x}{3} \in (-1,1)$. Since $x=\frac{k}{\gg_2-1}$ we have that $s=\frac{2(1-\gg_2)}{k}=\frac{-2}{x}$ and then with $c=\frac{-5x}{3}$ in \eqref{thisisF}
we get
$F(\gz)=\frac{(1-z^2)(1-x z)(1+x z)^2}{1-x^2}$, similarly to when $c=x$. This means that 
for this particular polarization determined (up to scale) by  $x=\frac{k}{\gg_2-1}$ and $a=\frac{5-x^2}{1-x^2}$, the K\"ahler metric given by \eqref{productmetric} (up to scale) is
both $(f_1,5)$-extremal and $(f_2,5)$-extremal, where $f_1=x\gz+1$ and $f_2=\frac{-5x}{3} \gz+1$. In other words, $(g,f_1)$ and $(g,f_2)$ are
$5$-weighted extremal K\"ahler twins as defined in \cite{BHLT25} and recalled in Definition \ref{twinsKahler} below. Note that $h(c)$ from \eqref{h} is not zero when $c=\frac{-5x}{3}$.
On the Sasaki level, we can say that the two rays determined by $f_1$ and $f_2$ in the Sasaki cone of the Sasaki manifold, corresponding to this specific polarization, are
extremal Sasaki twins as defined in \cite{BHLT25} and recalled in Definition \ref{twinsSasaki} below, with one of the extremal Sasaki metrics being cscS (whereas the other is not).
We summarize this in the following proposition:
\end{enumerate}

\begin{proposition}
Let $N=\bbp({\mathcal O} \oplus L) \rightarrow \Sigma_{\gg}$ be a ruled surface over a compact Riemann surface, $\Sigma_{\gg}$, with genus $\gg \geq 3$.
Assume that the holomorphic line bundle $L \rightarrow \Sigma$ satisfies that $\int_\Sigma c_1(L) =k$ such that $k<\frac{3(\gg -1)}{5}$.
Then $\bbc\bbp^1 \times N$ has an integer K\"ahler class $\Omega$ with a K\"ahler metric representative $(g,\omega)$ and two
distinct (even up to scale) positive, smooth Killing potentials, $f_1, f_2$ such that $g$ is both $(f_1,5)$- and $(f_2,5)$-extremal.
That is, $(g,f_1)$ and $(g,f_2)$ are $5$-weighted extremal twins.
Moreover, on the ($7$-dimensional) Boothby-Wang related Sasaki manifold,  the corresponding extremal Sasaki twins are both quasi-regular. One of them has
constant scalar curvature whereas the other does not.
\end{proposition}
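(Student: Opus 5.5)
We take $x=\frac{k}{\gg-1}$, which lies in $(0,\frac35)$ by the hypothesis $5k<3(\gg-1)$, and $a=\frac{5-x^2}{1-x^2}$. Both are rational and positive, and $s=\frac{2(1-\gg)}{k}=-\frac{2}{x}$. For $(g,\omega)$ we use the product metric \eqref{productmetric} on $\bbc\bbp^1\times N$, with $g_x$ the admissible metric whose $F$ is obtained from \eqref{thisisF} with $c=x$. Since $x$ and $a$ are rational, the product class is a rational multiple of an integer class. After rescaling, it becomes an integer (primitive) class $\Omega$, which fixes the join data $(l_1,l_2)$. Rescaling does not affect weighted extremality, since $Scal_{f,p}$ only changes by a constant factor.

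\textbf{The first potential.} For $f_1=x\gz+1$, the second case of Example \ref{cpos} shows that $c=x$ solves $h(c)=0$ exactly when $a=\frac{5-x^2}{1-x^2}$. In that case
$$F(\gz)=\frac{(1-\gz^2)(1-x\gz)(1+x\gz)^2}{1-x^2},$$
which satisfies \eqref{positivityF}. So $g$ is $(f_1,5)$-extremal, and by \eqref{cscS2} and \eqref{cscS} the associated Sasaki structure has constant scalar curvature.

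\textbf{The second potential.} For $f_2=c\gz+1$ with $c=-\frac{5x}{3}\in(-1,1)$, we must show that the same $F$ solves \eqref{ode} for some affine $A_1\gz+A_2$. Equivalently, we check that
$$(c\gz+1)^2F''-8c(c\gz+1)F'+20c^2F-(c\gz+1)^2\bigl(2a(1+x\gz)+2sx\bigr)$$
equals $-(1+x\gz)$ times an affine function of $\gz$. With $s=-2/x$ we have $2sx=-4$, and the left side is a quintic in $\gz$. The check therefore reduces to two things: the quintic is divisible by $(1+x\gz)$, and the quotient has vanishing coefficients in degrees $2,3,4$. This polynomial identity is the main computational step. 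We expect the degree-$4$ and degree-$3$ cancellations to be exactly what forces $c=-5x/3$ and $s=-2/x$, and we would verify it by direct expansion. The same computation for $c=x$ reproduces the first case, which serves as a sanity check.

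Since the solution of \eqref{ode} satisfying the boundary conditions (ii), (iii) is unique, given by \eqref{thisisF}, the $F$ above coincides with that solution for $c=-5x/3$. It already satisfies positivity, so Lemma \ref{ac} makes $g$ also $(f_2,5)$-extremal. The potentials $f_1$ and $f_2$ are not proportional because $x\neq-5x/3$.

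\textbf{Quasi-regularity and the non-cscS twin.} Both $f_i$ are affine in $\gz$ with rational coefficients. The induced Reeb fields $\xi_{f_i}$ therefore lie in the rational span of $\xi_1$ and the lift of the circle generator $K$, both of which generate circle actions. Hence they generate circle actions as well and are quasi-regular.

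It remains to show that $\xi_{f_2}$ is not cscS. By \eqref{cscS2} and the factorization of the left side as $\frac{4h(c)}{9(1-c^2)^7}$, this holds iff $h(-5x/3)\ne0$. Substituting $s=-2/x$, $a=\frac{5-x^2}{1-x^2}$ and $c=-5x/3$ into \eqref{h} gives a rational function of $x$. We would show that its numerator has no root in $(0,3/5)$, for instance by a sign or factoring argument; if needed, one could instead compute $A_1,A_2$ directly and show that $A_1\gz+A_2$ is not proportional to $c\gz+1$.
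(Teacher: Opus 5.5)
Your proposal is correct and is essentially the paper's own argument in Example \ref{quasiregularex}(1): you make the same choices $x=k/(\gg-1)$, $a=\frac{5-x^2}{1-x^2}$, $s=-2/x$, use the same fact that \eqref{thisisF} returns the same $F$ for $c=x$ and $c=-5x/3$, and rely on the same assertion $h(-5x/3)\neq 0$, which the paper also states without writing out the computation. Your deferred checks do go through, with one correction to your bookkeeping: since $L_c[\gz^m]=c^2(m-4)(m-5)\gz^m+2cm(m-5)\gz^{m-1}+m(m-1)\gz^{m-2}$ for $L_c[F]=(c\gz+1)^2F''-8c(c\gz+1)F'+20c^2F$, your left side is only cubic, with $\gz^3$-coefficient $-\frac{4x(3c+5x)(c-x)}{1-x^2}$ (this, not a degree-$4$ cancellation, is what singles out $c=-5x/3$) and with value $-2(1-c/x)^2(2+sx)$ at $\gz=-1/x$ (which forces $s=-2/x$), while the non-cscS claim follows by evaluating at $\gz=-1/c=\frac{3}{5x}$, where the left side equals $20c^2F(\frac{3}{5x})\neq 0$ because $\frac{3}{5x}>1$.
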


\begin{enumerate}[resume]

\item If $\gg_2=11$, $k=9$ (so $s=\frac{-20}{9}$), $x=9/10$ and $a=\frac{5-x^2}{1-x^2}=\frac{419}{19}$ we see that
see that $h(c)$ as defined in \eqref{h} is given by
$$h(c)=\frac{3}{475} (10 c-9) \left(540 c^4-885 c^3-350 c^2+543 c+190\right)$$
and we see that $h(c)$ has three solutions in $(-1,1)$; $c_1=9/10$, $c_2\approx -0.359$, and $c_3\approx -0.601$.

Further, we see that with $s=\frac{-20}{9}$, $x=9/10$, and $a=\frac{419}{19}$, \eqref{thisisF} implies that
$$F(\gz)= \frac{(1-\gz^2) g(\gz)}{190  \left(365 - 720 c +394 c^2-35 c^4\right)},$$
where
$$
\begin{array}{ccl}
g(c,\gz) & = & 10 (1805 - 3363 c + 22143 c^2 - 30840 c^3 + 10800 c^4)\\
\\
&+& 5 (12483 - 28728 c + 50505 c^2 - 45860 c^3 + 12600 c^4)\gz\\
\\
&+& 10 (5130 - 10317 c - 14657 c^2 + 30840 c^3 - 11465 c^4)\gz^2\\
\\
&+&  (20520 - 185151 c + 229300 c^2 - 68985 c^3)\gz^3
\end{array}.
$$
\end{enumerate}
Thus, for a given $c\in (-1,1)$, $F(\gz)$ satisfies {\em (i)} of \eqref{positivityF} precisely when $g(c,\gz)$ is positive for all $-1<\gz<1$. 
This is certainly true for  $g(c_1,\gz)=26353 (10 - 9 z) (10 + 9 z)^2/2000$ and numerically it is also evident that positivity holds\footnote{It is relatively easy to check for that for $-1<c<1$, $g(c,\pm 1)>0$. Since $g(c,\gz)$ is a cubic in $\gz$, it must have a real root $|\gz|>1$.
Further, the discriminant of $g(c,\gz)$ (as a polynomial in $\gz)$) is negative for both $c_2$ and $c_3$ and thus $g(c_i,\gz)$ (which now has precisely one real root) is positive for $-1<\gz<1$ and $i=2,3.$}
for $g(c_2,\gz)$ and $g(c_3,\gz)$. Thus we have three cscS rays in a $2$-dimensional subcone of the corresponding Sasaki cone. On the other hand
$g(0,\gz)=95 (190 + 657 \gz + 540 \gz^2)$ and this is negative for e.g. $\gz=-3/5$. So the regular ray ($c=0$) is not Sasaki extremal.
This means that the intersection of the extremal Sasaki cone and the $2$-dimensional subcone parametrized up to scale by
$-1<c<1$ is disconnected and the cscS rays corresponding to $c_2$ and $c_3$ are not in the same component as the cscS ray corresponding to $c_1$. Of course, here the entire Sasaki cone is $3$-dimensional, so this
might not mean that the three rays are not in the same connected component of the entire extremal Sasaki cone. In Example \ref{cscmoatseparated} below we shall see a case of a Sasaki manifold (of dimension $9$) with a $2$-dimensional Sasaki cone
that have two cscS-rays that are not in the same connected component of the extremal Sasaki cone.
\end{example}

\begin{example}\label{nocscex}
Now let us chose $\Sigma_{\gg_1}$ to have genus at least two and let us scale $g_a$ such that $a=-\frac{43137}{1337}$. For $\Sigma_{\gg_2}$  and $L_k$ we will chose $\gg_2>1$  and $k$ such that 
$s=-200$. For instance we could let $\gg_2=101$ and $k=1$. We call the resulting K\"ahler manifold $\tilde{N}$.
If we pick $x=1/2$, it is straightforward to see that $h(c)$ as defined in \eqref{h} has precisely one root in
$(-1,1)$, namely $c=2/5$. Thus this is is the only value of $c\in (-1,1)$ where \eqref{cscS2} and hence \eqref{cscS} holds.
With this data,
$$F(\gz)=\frac{(1-\gz^2) (2 \gz+5) \left(1820 \gz^2+191 \gz-292\right)}{8022}$$
and it is straightforward to see that $F(\gz)$ fails {\em (i)} of \eqref{positivityF} miserably.
This implies that the corresponding $\Theta(\gz)=\frac{F(\gz)}{(1+x\gz)}$ is not a positive function for $-1<\gz<1$.
Due to the fact that the genera of both $\Sigma_{\gg_1}$ and $\Sigma_{\gg_2}$ are at least two we know that the Sasaki cone for the Boothby-Wang constructed Sasaki manifold over $\tilde{N}$ with respect 
to an appropriate rescale of $g=g_a+g_x=g_{(-\frac{43137}{1337})}+g_{\frac{1}{2}}$ has dimension two. As such the Sasaki rays are parametrized by $c\in (-1,1)$. 
In fact, it can be checked that $F(\gz)$ given by \eqref{thisisF} is not satisfying  {\em (i)} of \eqref{positivityF} for any value $c\in (-1,1)$ and hence, due to Theorem 3/Corollary 7.3 in \cite{ApCaLe21}, the entire Sasaki cone is void of extremal
Sasaki metrics. 

We conclude that the Sasaki cone has no constant scalar curvature 
Sasaki metric at all and by Remark \ref{cscSinMi}, we have arrived at an example where the Sasaki cone of a join $M_1\star_{l_1,l_2} M_2$ does not have a constant scalar curvature ray despite the fact that
$M_1$ and $M_2$ each admit a cscS ray somewhere in their individual Sasaki cones. 
 \end{example}

\begin{example}\label{endonposnote} To end on a positive note, let us chose $\Sigma_{\gg_1}$ to have genus at least two and let us scale $g_a$ such that $a=-\frac{2675}{497}$. For $\Sigma_{\gg_2}$  and $L_k$ we will chose $\gg_2>1$  and set $k=\gg_2-1$ such that 
$s=-2$. 
If we pick $x=1/2$, it is straightforward to see that $h(c)$ as defined in \eqref{h} has precisely one root in
$(-1,1)$, namely $c=1/8$. Thus this is the only value of $c\in (-1,1)$ where \eqref{cscS2} and hence \eqref{cscS} holds.
With this data,
$$F(\gz)=\frac{(1-\gz^2) (\gz+8) \left(29 \gz^2+142 \gz+326\right)}{2982}$$
and it is straightforward to see that $F(\gz)$ satisfies {\em (i)} of \eqref{positivityF}.
Thus in this case the Sasaki manifold over $\Sigma_{\gg_1}\times \bbs_{\gg_2-1}$ corresponding to the polarization determined up to scale by $x$ and $a$ above, admits a Sasaki structure with constant scalar curvature somewhere in the Sasaki cone. 
\end{example}

\subsubsection{Higher Dimension Related Cases}\label{higherdim}
We can generalize the preliminary work in Section \ref{answerquestion2} even further by replacing $\bbc\bbp^1$ in $N_k=\bbc\bbp^1\times \bbf_k$ by any compact K\"ahler manifold, $N_1$, of complex dimension $d$ and  equipped with a CSC K\"ahler metric $g_a$ with constant scalar curvature\footnote{It would perhaps be more natural to write the scalar curvature as $2da$, but it is more convenient to stick with $2a$ here.} $2a$. We assume that after an appropriate rescale, the K\"ahler class of the K\"ahler form of $g_a$ is
an integer class. We will continue to replace 
 $\bbf_k$ by any admissible ruled surface, $\bbs_k=\bbp(\calo \oplus L_k)\rightarrow \Sigma_{\gg}$, where $\Sigma_{\gg}$ is any compact Riemann surface of genus $\gg$ and $L_k$ is a holomorphic line bundle of degree $k$.
We still have that $s=\frac{2(1-\gg)}{k}$ in Section \ref{admreview}. We just need to adjust the equations and subsequent $F(\gz)$ a bit to the fact that
now $p$ in \eqref{weightedscal} should more generally be $(d+2)+2=d+4$.

It is not hard to see that the adaptation of the work in \ref{answerquestion2} leads to
\begin{equation}\label{thisisFmoregenerally}
F(\gz)= (c\gz+1)^{p-1}\left(\frac{2(1-x)}{(1-c)^{p-1}}(\gz+1) + \int_{-1}^{\gz}Q(t)(\gz-t)dt\right),
\end{equation}
where 
$$
Q(\gz)=(c\gz+1)^{-(p+1)}\left((c\gz+1)^2\left(2a(1+x\gz)+2sx\right)-(A_1\gz + A_2)(1+x\gz)\right),
$$
and $A_1$, $A_2$ are solutions to the system
\begin{equation}\label{A1A2}
\begin{array}{ccl}
\alpha_{1,-(p+1)} A_1+\alpha_{0,-(p+1)} A_2 &=&2\beta_{0,-(p-1)}\\
\\
\alpha_{2,-(p+1)} A_1+\alpha_{1,-(p+1)} A_2 &=&2\beta_{1,-(p-1)},
\end{array}
\end{equation}
with
$
\alpha_{r,q}$
and
$
\beta_{r,q}
$ 
given exactly as before.
Finally, \eqref{cscS2} is adjusted to the more general cscS equation
\begin{equation}\label{cscS2moregenerally}
\alpha_{1,-p}\beta_{0,-(p-1)}-\alpha_{0,-p}\beta_{1,-(p-1)}=0.
\end{equation}

\begin{example}\label{cscmoatseparated}
Let us now assume that $N_1 = \bbc\bbp^2\#q\overline{\bbc\bbp}^2$
for $4 \leq q \leq 8$ ($\bbc\bbp^2$ blown up at
$q$ generic points).  Then $N_1$ has complex dimension $d=2$ and admits a KE metric $g_a$
which, for any choice of $a\in \bbq^+$, can be rescaled such that the scalar curvature equals $2a$. 
Given such an $a$ and a choice of $x\in (0,1)\cap \bbq$, we have (after an appropriate rescaling of the product K\"ahler form $\omega_a+\omega_x$) a polarization of 
$N_1\times \bbs_k$ and, assuming $\gg>1$, the $2$-dimensional Sasaki cone of the resulting $9$-dimensional Sasaki manifold is 
parametrized, up to scale, by $-1<c<1$.

We now choose $k$ and $\gg$ such that $s=-3$ (e.g. we could have $k=2$ and $\gg=4$) and let $a$ and $x$ be chosen such that
$a=\frac{3 \left(x^4+7\right)}{\left(1-x^2\right) \left(3-x^2\right)}$. In that case, the left hand side of \eqref{cscS2moregenerally} equals
$$\frac{4 (x-c) h(x,c)}{15 (1-c^2)^9  (1-x^2)  \left(3-x^2\right)},$$
where 
$$
\begin{array}{ccl}
h(x,c)& = & (-12 + 9 x + 8 x^2 - 12 x^3 + 20 x^4 + 3 x^5) c^6\\
&+ &2 x (9 - 16 x^2 + 15 x^4)c^5\\
&+ & (-18 - 27 x - 72 x^2 + 36 x^3 - 38 x^4 - 9 x^5)c^4\\
&+& 8 x (5 - x^2) (9 - 7 x^2)c^3\\
&+ &(-228 + 63 x + 408 x^2 - 84 x^3 - 68 x^4 + 21 x^5)c^2\\
&- & 10 x (9-x^2) c\\
& - &5 (1 -x^2 ) (2 + 3 x) (3 - x^2).
\end{array}
$$

Thus \eqref{cscS2moregenerally} is always solved by $c=x$ and any solutions beyond this, would be roots $-1<c<1$ of $h(x,c)$ as a polynomial in $c$.
First we note that with $c=c_1:=x$, $a=\frac{3 \left(x^4+7\right)}{\left(1-x^2\right) \left(3-x^2\right)}$, and $s=-3$, equation \eqref{thisisFmoregenerally}
gives us that 
$$F(\gz) =\frac{ (1 - \gz^2) (1 + x \gz)^2 (3 + x^2 - x(3-x^2) \gz  - 
    2 x^2 \gz^2)}{(1 - x^2)(3 - x^2)},$$
 which is easily seen to satisfy  {\em (i)} of \eqref{positivityF}. Thus we have at least one cscS ray in the Sasaki cone.
As we vary the value of $0<x<1$ we notice that the nature of $h(x,c)$ will change. If $x\in (0,1)\cap \bbq$ is sufficiently small, $h(x,c)<0$ for $-1<c<1$ and thus no further cscS rays exist in the 
corresponding Sasaki cone. On the other hand e.g. 
$$
\begin{array}{ccl}
h(8/10,c)&= &\frac{2 \left(5236 c^6+12260 c^5-134003 c^4+197072 c^3+30532 c^2-107380 c-29205\right)}{3125}\\
\\
\text{and}\\
\\
h(9/10,c)&=& \frac{3 \left(290849 c^6+352890 c^5-3487407 c^4+3348648 c^3+2190983 c^2-2503170 c-325945\right)}{100000}
\end{array}
$$
each have two roots in $(-1,0)$. This can for example be seen directly by noticing that for both $x=8/10$ and $x=9/10$ we have $h(x,0)<0$, $h(x,-2/5)>0$, and $h(x,-9/10)<0$.

Now, for $x=8/10$, $a=\frac{3 \left(x^4+7\right)}{\left(1-x^2\right) \left(3-x^2\right)}=4631/177$, and $s=-3$, $F(\gz)$ given by \eqref{thisisFmoregenerally}, 
is numerically seen to satisfy  {\em (i)} of \eqref{positivityF} for all values of $c\in (-1,1)$. This means that the Sasaki cone is exhausted by extremal Sasaki rays and we
have three distinct cscS rays in this cone.

However, for $x=9/10$,  $a=\frac{3 \left(x^4+7\right)}{\left(1-x^2\right) \left(3-x^2\right)}= 76561/1387$, and $s=-3$, $F(\gz)$ given by \eqref{thisisFmoregenerally}, 
does not always satisfy  {\em (i)} of \eqref{positivityF}. Numerically it can be seen that there exists a small closed interval $I=(c_l,c_r) \subset (-1,1)$ with $c_l<0<c_r$
such that for
$c\in I$, $F(\gz)$ does not satisfy {\em (i)} of \eqref{positivityF} while for $c\in (-1,1)\setminus I$, $F(\gz)$ satisfies {\em (i)} of \eqref{positivityF}. 
Thus, due to Theorem 3/Corollary 7.3 in \cite{ApCaLe21}, the extremal Sasaki cone is disconnected
and is represented up to scale by two open intervals $I_l=(-1,c_l)$ and $I_r=(c_r,1)$.
Any ray in the ``moat'' determined by $c\in I$ has no extremal Sasaki metric at all. Finally, numerically the roots of $h(9/10,c)$ in $(-1,0)$ are
$c_2\approx -0.120$ and $c_3\approx -0.786$. It can be checked that $c_1=9/10\in I_r$, $c_2\in I$, and $c_3\in I_l$. Thus $c_1$ and $c_3$ correspond to genuine cscS rays, whereas $c_2$ does not.
The two cscS rays are thus separated by a non-extremal moat in the Sasaki cone.
Figure \ref{moat} is a sketch of the situation where the two dimensional Sasaki cone is viewed as the open first quadrant in the $(w_1,w_2)$-plane and the 
ray corresponding to a value $c\in (-1,1)$ is given by line through the origin with slope equal to $\frac{1-c}{1+c}$. That is $c=\frac{w_1-w_2}{w_1+w_2}$.

\bigskip

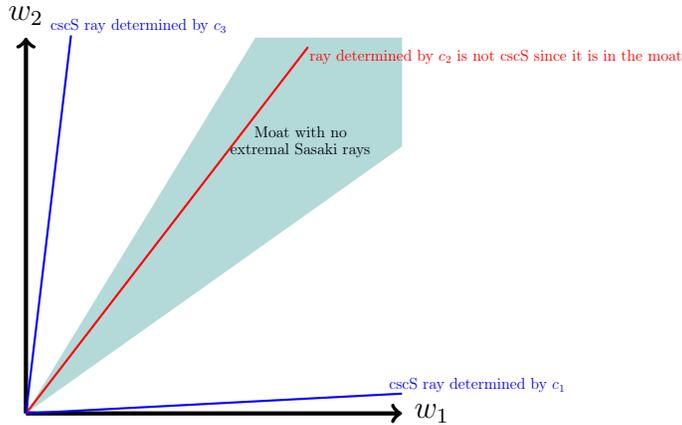
\begin{figure}
\begin{tikzpicture}[scale=5]
 \draw[->,ultra thick] (0,0)--(1,0) node[right]{$w_1$};
\draw[->,ultra thick] (0,0)--(0,1) node[above]{$w_2$};
     \fill[teal, opacity=0.3] (0,0) -- (1,0.71) --  (1,1) -- (0.61,1) -- cycle;
       \draw[scale=1,domain=0:1,smooth,variable=\x,blue, thick] plot ({\x},{(\x)/19}) ;
          \node[scale=0.5, blue] at (1.2,0.075) {cscS ray determined by $c_1$};  
         \draw[scale=1,domain=0:0.75,smooth,variable=\x,red, thick] plot ({\x},{1.3*(\x)}) ;
        \node[scale=0.5, red] at (1.25,0.95) {ray determined by $c_2$ is not cscS since it is in the moat};
        \draw[scale=1,domain=0:0.12,smooth,variable=\x,blue, thick] plot ({\x},{8.4*(\x)}) ;
        \node[scale=0.5, blue] at (0.3,1.03) {cscS ray determined by $c_3$};
         \node[scale=0.5, black] at (0.73,0.75) {Moat with no};
 \node[scale=0.5, black] at (0.73,0.7) {extremal Sasaki rays};
   \end{tikzpicture} 
 \caption{Example \ref{cscmoatseparated} with $x=9/10$,  $a= 76561/1387$}  \label{moat}
   \end{figure}
 
 \end{example}
 
 \begin{example} \label{resurrection}
 Recall the example presented in Example \ref{nocscex}. This case demonstrates the existence of a join-constructed Sasaki manifold whose resulting Sasaki cone admits no extremal Sasaki metrics at all, much less a cscS ray.
 Suppose we now take this Sasaki manifold as $M_2$ in a regular join $M_1\star_{l_1,l_2} M_2$, where $M_1$ is $S^3$. This is equivalent to choosing $N_1=\bbc\bbp^1 \times \Sigma_{\gg_1}$ and $N_2=\bbs_k \rightarrow \Sigma_{\gg_2}$,
 where $\Sigma_{\gg_1}$, $\Sigma_{\gg_2}$, $k$ (hence $s=-200$), and $x=1/2$ are as in Example \ref{nocscex} and the K\"ahler metric chosen on $N_1=\bbc\bbp^1 \times \Sigma_{\gg_1}$  is $g_1=g_{q}+g_{a}$, where
 $g_a$ is the K\"ahler metric on $\Sigma_{\gg_1}$ chosen in Example \ref{nocscex}, with scalar curvature $2a=-2\cdot \frac{43137}{1337}$, and $g_{q}$ is a rescale of the Fubini-Study metric on $\bbc\bbp^1$ with scalar curvature $2q\in \bbq^+$.
 Note that the value of $q$ in connection with the already chosen values of $a$ and $x$ as well as the genera of $\Sigma_{\gg_i}$, $i=1,2$ will determine a polarization of $N_1\times N_2$ and hence the values of co-prime $(l_1,l_2)$ in the join. 
 Since there are several workable choices of the genera in Example \ref{nocscex}, we will leave this aspect of the current example up to the reader.
 
 Moving on, we now see that this situation can be handled by the set-up in the second paragraph of Section \ref{higherdim}: We simply need to set $p=6$, $s=-200$, $x=1/2$, and (the new) $a=q-\frac{43137}{1337}$ in equations
 \eqref{thisisFmoregenerally},
 \eqref{A1A2}, and \eqref{cscS2moregenerally} as well as in the associated expressions for $Q(\gz)$, $\alpha_{r,q}$, and $\beta_{r,q}$.  We can now, for example, calculate that with $p=6$, $s=-200$, $x=1/2$, and $a=q-\frac{43137}{1337}$, 
 the cscS equation \eqref{cscS2moregenerally} is solved by $c=3/5$ when we let $q=125919069/1574986$. For this data, \eqref{thisisFmoregenerally} and \eqref{A1A2} implies that
 $$F(\gz)=\frac{(1-\gz^2)(3 \gz+5) \left(413335+59909 \gz-297891 \gz^2-76401 \gz^3\right)}{527744},$$
 which is clearly positive for $-1<\gz <1$.  Thus, we obtain an cscS metric in the Sasaki cone of the new join. In other words, while we have non-existence of cscS metrics for the join in Example \ref{nocscex}, we can ``resurrect'' cscS existence by joining this join by $S^3$ in a certain way. 
Note that $F(\gz)$ given by \eqref{thisisFmoregenerally} and \eqref{A1A2} and
$p=6$, $s=-200$, $x=1/2$, and $a=q-\frac{43137}{1337}$, with $q=125919069/1574986$,
fails the positivity condition in \eqref{positivityF} for many $c$ values in $(-1,1)$. In other words, the Sasaki cone of the new join is not exhausted by extremal Sasaki metrics.
 \end{example}

\section{Twins}\label{twins}

In \cite{BHLT25} the notion of {\it weighted extremal K\"ahler twins} was introduced as follows.   

\begin{definition}\label{twinsKahler}
Let $(N,J,g,\omega)$ be a K\"ahler manifold of complex dimension $n$ and let $p\in \bbr$. Suppose that $f_1,f_2$ are two positive, smooth Killing potentials on $(N,J,g,\omega)$ that are not constant rescales of each other. 
If $g$ is simultaneously $(f_1,p)$-extremal and $(f_2,p)$-extremal then we say that $(g,f_1)$ and $(g,f_2)$ are $p$-\emph{weighted extremal K\"ahler twins}. 
\end{definition}

We recall the natural companion definition to Definition \ref{twinsKahler}, when $p=n+2$, from \cite{BHLT25}:
\begin{definition}\label{twinsSasaki}
Let $(M,\cald, J)$ be a pseudo-convex CR structure of Sasaki type on a compact smooth manifold $M$ of dimension $2n+1$. \emph{Extremal Sasaki twins} of $(\cald,J)$ are two extremal Sasaki structures both compatible with $(\cald,J)$ and having commuting non-colinear Sasaki-Reeb vector fields. 
\end{definition}

It is well-known that for any positive Killing potential $f$ of $(\bbc\bbp^n, g,J,\omega)$ with its Fubini-Study K\"ahler metric $g$, $(g,f)$ and $(g,1)$ are $(n+2)$-weighted extremal twins.
In \cite{BHLT25} we gave a direct proof using
the toric formalism and a simple calculation, but the result originates from \cite{Bry01,ApCaGa06}  and is due to the fact that the CR-structure of the $(2n+1)$-dimensional round sphere above $\bbc\bbp^n$
governs the Bochner-flatness of the (local) K\"ahler quotients which in turn ensures extremality. In Section 3.1 in \cite{ApCa18} this is explained and in particular we have from Proposition 1 of \cite{ApCa18} that
if $(N,J,g,\omega)$ is a Bochner flat K\"ahler manifold then it is an $(f,n+2)$-extremal K\"ahler structure for any positive smooth Killing potential $f$ on $(N,J,g,\omega)$.
Thus, we also have that if $(N,J,g,\omega)= N^d_{-c}\times N^{n}_c$, where $N^d_{-c}$ is a compact K\"ahler manifold of complex dimension $d$ and constant holomorphic sectional curvature $-c$
and $N^{n}_c$ is a compact K\"ahler manifold of complex dimension $n$ and constant holomorphic sectional curvature $c$, then  $(N,J,g,\omega)$ is an $(f,d+n+2)$-extremal K\"ahler structure for any positive smooth Killing potential $f$ on $(N,J,g,\omega)$.
This follows from Section 2 of \cite{TaLi70} which shows that $(N,J,g,\omega)= N^d_{-c}\times N^{n}_c$ is Bochner flat.

Inspired by this, we will show the following statement.

\begin{proposition}\label{manytwins}
Let $(N_1, g_1, \omega_1)$ be a compact K\"ahler manifold of complex dimension $d$ with constant scalar curvature $Scal_1 =\frac{-2d(d+1)}{n+1}$ and
let $(N_2, g_2, \omega_2)=(\bbc\bbp^n, g_{FS}, \omega_{FS})$ denote the complex projective space of complex dimension $n$ with the Fubini-Study metric of constant scalar curvature 
$Scal_{FS}=2n$.
Let $f$ be the pullback to $N_1\times \bbc\bbp^n$ of any positive smooth $\bbt^n$-invariant Killing potential $f$ on $(\bbc\bbp^n, g_{FS}, \omega_{FS})$. 
Then $(N_1\times \bbc\bbp^n, g_1+g_{FS},\omega_1+\omega_{FS})$
is an $(f,d+n+2)$-extremal K\"ahler structure.
\end{proposition}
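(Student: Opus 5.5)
The plan is a direct computation of $Scal_{f,p}(g)$ with $p=d+n+2$, using the product facts from Section~\ref{transition} and toric (symplectic-potential) coordinates on $\bbc\bbp^n$. The aim is to reduce the problem to the known case of $\bbc\bbp^n$ alone, which is $(f,n+2)$-extremal by Bochner flatness (Proposition 1 of \cite{ApCa18}).

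\textbf{Step 1: reduce to $\bbc\bbp^n$.} Since $f$ is pulled back from $\bbc\bbp^n$, the product bullet points give $Scal_g=Scal_1+2n$, $\Delta_g f=\Delta_{FS}f$ and $|df|^2_g=|df|^2_{FS}$. Writing $p=(n+2)+d$, we get
$$Scal_{f,p}(g)=Scal_{f,n+2}(g_{FS})+f^2\,Scal_1-2d\,f\Delta_{FS}f-d(2n+3+d)\,|df|^2_{FS},$$
using $p(p-1)-(n+2)(n+1)=d(2n+3+d)$. The first term is a Killing potential by Bochner flatness of $\bbc\bbp^n$. So it suffices to show that
$$E:=f^2\,Scal_1-2d\,f\Delta_{FS}f-d(2n+3+d)\,|df|^2_{FS}$$
is a Killing potential on $\bbc\bbp^n$. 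Its pullback is then a Killing potential on the product.

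\textbf{Step 2: use the structure of Killing potentials on $\bbc\bbp^n$.} The $\bbt^n$-invariant Killing potentials are exactly the affine functions $f=\lambda+\langle a,\mu\rangle$ of the moment map $\mu$ on the simplex. Two facts are needed.
\begin{itemize}
\item With the normalization $Scal_{FS}=2n$ (so $Ric=g$), the non-constant part of $f$ is a first eigenfunction, hence $\Delta_{FS}f=2(f-f_0)$ for a constant $f_0$. Therefore $f\Delta f=2f^2-2f_0f$, which is $2f^2$ plus a Killing potential.
\item In symplectic coordinates, $|df|^2=\sum_{ij}H_{ij}a_ia_j$, where $H=(\mathrm{Hess}\,u)^{-1}$. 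For the Fubini--Study (Guillemin) potential on the standard simplex, $H_{ij}$ is, up to the scale fixed by $Scal=2n$, of the form $\mu_i\delta_{ij}-\kappa'\mu_i\mu_j$. Hence
$$|df|^2_{FS}=-\kappa f^2+(\text{affine in }\mu)$$
for a universal constant $\kappa>0$.
\end{itemize}

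\textbf{Step 3: match the $f^2$ coefficient.} Substituting into $E$ gives
$$E=f^2\bigl[Scal_1-4d+\kappa d(2n+3+d)\bigr]+(\text{affine in }\mu).$$
The affine part is a Killing potential, so $E$ is a Killing potential exactly when the bracket vanishes. With $\kappa=2/(n+1)$, the bracket vanishes precisely when
$$Scal_1=4d-\frac{2d(2n+3+d)}{n+1}=\frac{-2d(d+1)}{n+1},$$
which is the hypothesis of the Proposition.

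\textbf{Main obstacle.} The crux is pinning down $\kappa=2/(n+1)$ for the normalization $Scal_{FS}=2n$. I would verify this in two ways:
\begin{itemize}
\item Check it on $n=1$ via the admissible description (\eqref{normdz} with $F=1-\gz^2$ type profile on $\bbc\bbp^1$ of scalar curvature $2$).
\item Independently confirm it from the fact that the $d=0$ version of $E$ must be consistent with the known $(f,n+2)$-extremality on $\bbc\bbp^n$, via the $\bbc\bbp^n$ formula $Scal_{f,n+2}=2nf^2-2(n+1)f\Delta f-(n+2)(n+1)|df|^2$ being affine.
\end{itemize}
Plugging in Steps 2 and 2 into that formula gives $f^2[2n-4(n+1)+(n+2)(n+1)\kappa]$, which must vanish. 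This forces $\kappa=2/(n+1)$, and that derivation gives the required identity without any toric bookkeeping.
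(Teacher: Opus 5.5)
Your argument is correct. It uses the same mechanism as the paper's proof: on $\bbc\bbp^n$, both $f\Delta_{FS}f$ and $|df|^2_{FS}$ equal a universal constant times $f^2$ plus an affine function of the moment coordinates. Hence $Scal_{f,p}(g)$ is a Killing potential once the total $f^2$ coefficient vanishes. Your bracket for $E$ is exactly the paper's $f^2$ coefficient minus the (vanishing) $d=0$ coefficient.

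The difference is organizational. The paper writes the inverse Hessian $H_{ij}=2\ell_i\delta_{ij}-\frac{2\ell_i\ell_j}{n+1}$ explicitly, computes $\Delta_{FS}x_k=2x_k$ from it, and checks the full coefficient directly. Its computation is therefore self-contained, and setting $d=0$ reproves the $(f,n+2)$-extremality of $\bbc\bbp^n$ along the way. You instead split off $Scal_{f,n+2}(g_{FS})$ and treat its extremality (Bochner flatness) as a black box. You then get $\Delta_{FS}f$ from the first-eigenvalue property of Killing potentials on a K\"ahler--Einstein metric with $Ric=g$, and let the known case fix $\kappa$. This spares you tracking the normalization of the simplex, and it makes transparent that the hypothesis on $Scal_1$ is precisely what cancels the extra weight $d$ in $p$.

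Your ``main obstacle'' can also be settled directly, with no appeal to the known case. For $H=2\bigl(\mathrm{diag}(\mu)-\mu\mu^T/c\bigr)$ on the simplex $\{\mu_i\geq 0,\ \sum_i\mu_i\leq c\}$, one computes $Scal=-\sum_{i,j}\partial_i\partial_jH_{ij}=2n(n+1)/c$. So $Scal_{FS}=2n$ forces $c=n+1$, hence $\kappa=2/c=2/(n+1)$, in agreement with the paper's $H$.
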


\begin{remark}
Note that holomorphic sectional curvature of  $(\bbc\bbp^n, g_{FS}, \omega_{FS})$ is given by 
$$\frac{Scal_{FS}}{2n(n+1)}=\frac{2n}{2n(n+1)}=\frac{1}{n+1}.$$
If $(N_1, g_1, \omega_1)$ also had
constant holomorphic sectional curvature, this holomorphic sectional curvature would be equal to 
$$\frac{Scal_1}{2d(d+1)}=\frac{\frac{-2d(d+1)}{n+1}}{2d(d+1)}=\frac{-1}{n+1}$$ 
and thus, in this special case, the proposition would follow from the discussion above.
However, we do not need to assume that $(N_1, g_1, \omega_1)$ has constant holomorphic sectional curvature in order to prove the proposition above.
\end{remark}

\begin{proof}[Proof of Proposition \ref{manytwins}]
We only need to make very small adjustments to the calculations in Section 5.4 of \cite{BHLT25}, but for the convenience of the reader we will write up the details.

We use the standard toric set-up for $\bbc\bbp^n$. The Delzant polytope for $\bbc\bbp^n$ is a $n$-simplex defined by $\ell_1(x) = 1+x_1$, $\dots$, $\ell_n(x) = 1+x_n$ and $\ell_0(x) = 1-x_1-\dots-x_n$, the symplectic potential is known, from \cite{Abr01}, to be $$\varphi(x) = \frac{1}{2}\sum_{k=0}^n \ell_k(x) \log \ell_k(x)$$ and, thus, its Hessian is 
$${\bf G} = \frac{1}{2\ell_0} \begin{pmatrix}
    \frac{\ell_0}{\ell_1} +1 & 1 &1 &\dots &1\\
    1&\frac{\ell_0}{\ell_1} +1 & 1& \dots &1\\
    1&1 & \dots & \dots &1\\
    && \vdots&&\\
     1&&&1& \frac{\ell_0}{\ell_n} +1 \\
\end{pmatrix}$$ with inverse 

$${\bf H} = \begin{pmatrix}
    2\ell_1 -\frac{2\ell_1^2}{n+1} & -\frac{2\ell_1\ell_2}{n+1} &  -\frac{2\ell_1\ell_3}{n+1} &\dots & -\frac{2\ell_1\ell_n}{n+1} \\
     -\frac{2\ell_1\ell_2}{n+1} &2\ell_2 -\frac{2\ell_2^2}{n+1} &  -\frac{2\ell_3\ell_2}{n+1}& \dots & -\frac{2\ell_n\ell_2}{n+1}\\
    \vdots&& \vdots&&\vdots\\
    - \frac{2\ell_1\ell_n}{n+1} &\dots && -\frac{2\ell_{n-1}\ell_n}{n+1}& 2\ell_n -\frac{2\ell_n^2}{n+1}\\
\end{pmatrix}.$$ 

This gives
\begin{equation}\label{eqScaltoric}
    Scal(g_{FS}) = - \sum_{i,j=1}^n\frac{\partial^2 H_{ij}}{\partial x_i\partial x_j} =2n
\end{equation} 
(as desired)
and for all $j=1,\dots, n$,
\begin{equation}\label{eqLaptoric}
    \Delta_{g_{FS}}x_k = - \sum_{i,j=1}^n \frac{\partial }{\partial x_i} \left( H_{ij}\frac{\partial x_k}{\partial x_j}\right) = -  \sum_{i=1}^n\frac{\partial }{\partial x_i} H_{ik}=  2x_k
    \end{equation}
while for a function $h$ of $x_1,...,x_n$,
\begin{equation}\label{eqNormgradtoric}
    |dh|^2_{g_{FS}} =  \sum_{i,j=1}^n H_{ij} \frac{\partial h }{\partial x_i} \frac{\partial h}{\partial x_j}.
\end{equation}

Any positive ($\bbt^n$-invariant) $g_{FS}$-Killing potential can be witten as   
 $$f(x) = \langle v, x \rangle +\lambda = \sum_{i=1}^nv_i x_i +\lambda$$
 for appropriate constants $v_1,...,v_n$ and $\lambda$.      

With $g=g_1+g_{FS}$, $p=d+n+2$ and $f$ as above (pulled back to $N_1\times \bbc\bbp^n$), \eqref{weightedscal} becomes
\begin{equation}\label{eqfscaltoric}
\begin{split}
    Scal_{f,p}(g)= \frac{2n(n+1)-2d(d+1)}{n+1}f^2&(x) -4(d+n+1) f(x) \sum_{j=1}^n  x_j v_j  \\
    &- (d+n+1)(d+n+2) \sum_{i,j=1}^n \left(2 \delta_{ij}\ell_i(x) -2\frac{\ell_i(x)\ell_j(x)}{n+1}\right)v_iv_j
\end{split}
\end{equation}
 Denoting $m = \sum_{i=1}^n v_i$ and observing that $f+m-\lambda=\sum_{i=1}^n v_il_i$ we see that we can rewrite \eqref{eqfscaltoric} as
\begin{equation}\label{eqfscaltoric2}
\begin{split}
   Scal_{f,p}(g) &= \frac{2n(n+1)-2d(d+1)}{n+1}f^2(x) -4(d+n+1) f(x)( f(x) -\lambda)\\
    &\qquad  +\frac{2(d+n+1)(d+n+2)}{n+1} (f^2(x) + 2(m-\lambda) f(x) +(m-\lambda)^2) \\
    & \qquad\qquad\qquad- 2(d+n+1)(d+n+2) \sum_{i=1}^n v_i^2(1+x_i).
\end{split}
\end{equation} Note that in the last equation, the coefficient of $f^2$ on the right hand side is
$$
\begin{array}{cl}
&\frac{2n(n+1)-2d(d+1)}{n+1}-4(d+n+1) +\frac{2(d+n+1)(d+n+2)}{n+1} \\
\\
= & 2\left(\frac{n(n+1)-d(d+1)-2(d+n+1)(n+1)+(d+n+1)(d+n+2)}{n+1}\right)\\
\\
= & 2\left(\frac{n(n+1)-d(d+1)+(d+n+1)(d-n)}{n+1}\right)\\
\\
=&0
\end{array}
$$ 
and therefore $ Scal_{f,p}(g)$ is linear in $x$ and hence a Killing potential.
\end{proof}

\begin{remark}\label{polarization}
As long as the K\"ahler class $\left[\omega_1/2\pi\right]$ is a rational rescale of an integer K\"ahler class, we can do an overall rational rescale of $[\omega_1+\omega_{FS}]$ to 
get a primitive polarization of $N_1\times \bbc\bbp^n$
and hence a natural Sasaki structure above this product (via the Boothby-Wang construction). Moreover, if $(N_1, g_1, \omega_1)$ has no smooth Killing potentials at all 
(e.g. if it is negative K\"ahler-Einstein), we see that 
the Sasaki cone is determined by the set of positive smooth Killing potentials $f$ on $(\bbc\bbp^n, g_{FS}, \omega_{FS})$. 
Therefore, Proposition \ref{manytwins} tells us that the entire Sasaki cone is exhausted by extremal Sasaki metrics and they
are all twins.

We can make this more explicit when $(N_1, \omega_1,g_1)$ is a compact K\"ahler-Einstein manifold. In this case, since the scalar curvature of $(\omega_1,g_1)$
in Proposition \ref{manytwins} is given by $Scal_1 =\frac{-2d(d+1)}{n+1}$, we have that the Ricci form 
$\rho_1=\frac{Scal_1}{2d}\omega_1=-\frac{(d+1)}{n+1}\omega_1$. Since we also know that $c_1(N_1)=[\frac{\rho_1}{2\pi}] = \cali_{N_1}\gamma_1$, where $\gamma_1$ is a generator of
$H^2(N_1,\bbz)$ and $\cali_{N_1}$ is some negative integer\footnote{This is called {\em Canonical Index} in Section 3.1 of \cite{BoTo14a}, but we could also call it the {\em K\"ahler-Einstein Index}. In any case, it is meant to generalize the {\em Fano
Index}, which is specifically defined for Fano manifolds.}, we have that $[\frac{\omega_1}{2\pi}]=\frac{-(n+1)\cali_{N_1}}{d+1}\gamma_1$. Further, since $Scal_{FS}=2n$, we have that $\rho_{FS}=\omega_{FS}$ and, since the index of
$\bbc\bbp^n$ is $n+1$, we then know that $[\frac{\omega_{FS}}{2\pi}]=(n+1)\gamma_{FS}$, where $\gamma_{FS}$ is a generator of
$H^2(\bbc\bbp^n,\bbz)$. Thus $[\omega_1+\omega_{FS}]=\frac{2\pi (n+1)}{(d+1)}\left(-\cali_{N_1}\gamma_1+(d+1)\gamma_{FS}\right)$. Therefore, the corresponding primitive polarization is
$l_1 \gamma_1+l_2 \gamma_{FS}$, where $l_1=\frac{-\cali_{N_1}}{\gcd(d+1,-\cali_{N_1})}$ and $l_2=\frac{d+1}{\gcd(d+1,-\cali_{N_1})}$.
Thus if $-\cali_{N_1}$ is divisible by $(d+1)$, then $l_2=1$. Whenever $l_2=1$ is true, we know from Proposition 7.6.7 in \cite{BG05} that the corresponding Sasaki manifold is a $S^{2n+1}$-bundle over $N_1$.
This is for example the case if $N_1$ is a compact Riemann surface of genus $\gg \geq 2$. In this case
$-\cali_{N_1}=2(\gg-1)$ and $d+1=2$. Higher dimensional cases with $l_2=1$ can easily  be obtained by letting $N_1$ be the product of $d>1$ copies of a compact Riemann surface of genus $\gg\geq 3$ such that
$2(\gg-1)$ is divisible by $d+1$. [This follows from the fact that also in this case $-\cali_{N_1}=2(\gg-1)$.]
We could also let $N_1=\Sigma_{\gg_1}\times \Sigma_{\gg_2}$, where $\Sigma_{\gg_i}$ is a compact Riemann surface of genus $\gg_i=1+3k_i$, with $k_i\in \bbz^+$. Then $-\cali_{N_1}=2\gcd(\gg_1-1,\gg_2-1)=6\gcd(k_1,k_2)$.
and clearly divisible by $d+1=3$. Thus, also in this case, $l_2=1$.

Finally, we can certainly also construct examples, satisfying the assumption in Proposition \ref{manytwins}, where $(N_1, \omega_1,g_1)$ is not K\"ahler-Einstein and the corresponding primitive polarization is
$l_1 \gamma_1+l_2 \gamma_{FS}$, where $l_2=1$ and $\gamma_1$ and $\gamma_{FS}$ are generators of $H^2(N_1,\bbz)$ and $H^2(\bbc\bbp^n,\bbz)$, respectively.
As an example of this, let $N_1=\Sigma_{\gg_1}\times \Sigma_{\gg_2}$ where $\Sigma_{\gg_1}$ is a compact Riemann surface of genus $\gg_1=2k_1+1$ with $k_1\in \bbz^+$
and $\Sigma_{\gg_2}$ is a compact Riemann surface of genus $\gg_2\geq 2$.
Suppose that for $i=1,2$, the K\"ahler structure $(g_{\Sigma_{\gg_i}},\omega_{\Sigma_{\gg_i}})$ on $\Sigma_{\gg_i}$ has constant scalar curvature $4(1-\gg_i)$ and
thus $\gamma_{\Sigma_{\gg_i}}=[\omega_{\Sigma_{\gg_i}}/2\pi]$ is a generator of $H(\Sigma_{\gg_i},\bbz)$.
Then the K\"ahler structure $(\omega_1,g_1)$ on $N_1$ given by $\omega_1=(n+1)\left(k_1\omega_{\Sigma_{\gg_1}}+(\gg_2-1)\omega_{\Sigma_{\gg_2}}\right)$ 
is not K\"ahler-Einstein (since the Ricci form of $(g_1,\omega_1)$ is $\rho_1+\rho_2=-2(2k_1\omega_{\Sigma_{\gg_1}}+(\gg_2-1)\omega_{\Sigma_{\gg_2}})$) and has constant scalar curvature equal to $-12/(n+1)$, which is the required value of $\frac{-2d(d+1)}{n+1}$, 
since $d=2$. Furthermore, the K\"ahler class of the product metric is
$[\omega_1+\omega_{FS}]  =2\pi (n+1)\left(k_1 \gamma_{\Sigma_{\gg_1}}+(\gg_2-1)\gamma_{\Sigma_{\gg_2}}+\gamma_{FS}\right)$. 
Thus, the corresponding primitive polarization is
$l_1 \gamma_1+l_2 \gamma_{FS}$ where $l_1=\gcd(k_1,(\gg_2-1))$, $\gamma_1=\frac{1}{\gcd(k_1,(\gg_2-1))}\left(k_1 \gamma_{\Sigma_{\gg_1}}+(\gg_2-1)\gamma_{\Sigma_{\gg_2}}\right)$, and $l_2=1$ (as desired).

\end{remark}

\begin{remark}\label{nocscStwinshere}
Assuming the assumptions of Proposition \ref{manytwins} and assume that we are in the situation of Remark \ref{polarization}. 
If $d\geq n$ then $Scal_1 =\frac{-2d(d+1)}{n+1}\leq -2n=-Scal_{FS}$ and hence when $f$ is constant, 
the product metric on 
$N_1\times \bbc\bbp^n$ 
has non-positive constant scalar curvature $Scal_{f,p}(g)$. In that case, it follows as a special case of the uniqueness result in \cite{JeLe87} that no other choice of $f=v(x_1+\cdots+x_l)+\lambda$ would give $Scal_{f,p}(g)$ constant.

More generally, for any choice of $d,n\in \bbz^+$, we have that
\begin{equation}
\begin{split}
   Scal_{f,p}(g) &= 4(d+n+1)\left(\frac{\lambda(n+1) +(d+n+2)(m-\lambda)}{n+1}\right)f\\
    &\qquad  +\frac{2(d+n+1)(d+n+2)}{n+1} \left((m-\lambda)^2 -(n+1) \sum_{i=1}^n v_i^2(1+x_i)\right)\\
\end{split}
\end{equation}
From the statement above \eqref{cscS}, we know that we have a cscS solution if and only if $Scal_{f,p}(g)$ is a constant times $f$. Clearly this is equivalent to the existence of a constant $k$ such that
$$(m-\lambda)^2 -(n+1) \sum_{i=1}^n v_i^2(1+x_i)=kf.$$
Recall that $m= \sum_{i=1}^n v_i$ and $f=\sum_{i=1}^nv_i x_i +\lambda$. With this, the condition above becomes
$$( \sum_{i=1}^n v_i-\lambda)^2 -(n+1) \sum_{i=1}^n v_i^2(1+x_i)=k(\sum_{i=1}^nv_i x_i +\lambda),$$
for all $x_i$ in their appropriate domain.
Comparing the constant and the $x_i$ terms on each side of the equation, we get a system of $n+1$ equations in $v_1,\dots,v_n$ and $k$:
\begin{equation}
\begin{split}
(\sum_{i=1}^n v_i-\lambda)^2 -(n+1) \sum_{i=1}^n v_i^2 & = k\lambda\\
 -(n+1)  v_i^2 & =  k v_i .\\
\end{split}
\end{equation}
The trivial solution to this system is $v_i=0$ for all $i=1,...,n$ and $k=\lambda$. Assuming we have a non-trivial solution, then, without loss of generality, there is some integer $1\leq l <n$ such that $v_1=v_2=\cdots=v_l=v\neq 0$, $v_{l+1}=\cdots = v_n=0$, $k=-(n+1)v$,
and
\begin{equation}\label{nontrivial}
(l v-\lambda)^2-(n+1)lv^2=-\lambda(n+1) v.
\end{equation}
Equation \eqref{nontrivial} simplifies to
$$\left((n+1)l-l^2\right)v^2+\lambda\left(2l-(n+1)\right) v-\lambda^2=0,$$
which has solutions $v=\frac{\lambda}{l}$ and $v=\frac{-\lambda}{(n-l)+1}$.
Since we need $f=v(x_1+\cdots+x_l)+\lambda$ to be strictly positive over the closed domain given by $x_i\geq -1$ for $i=1,...,n$ and $x_1+\cdots+x_n\leq 1$,
it easy to see that none of these options will work. In conclusion, the only extremal Sasaki metric in Proposition \ref{manytwins} that is cscS is the regular one ($f$ constant) with K\"ahler quotient equal to the original product metric.
\end{remark}

\begin{example}\label{n=d=1}
Let us assume that 
$n=d=1$ in Proposition \ref{manytwins}. That is, we are considering $\Sigma_\gg\times \bbc\bbp^1$ where 
$\Sigma_\gg$ has genus $\gg \geq 2$ and the assumption made in the proposition is that $g_{\Sigma_\gg}$ is scaled such that $Scal_{\Sigma_\gg}=-2$. Note that since the
the K\"ahler-Einstein index of ${\Sigma_\gg}$ is $2(1-\gg)$ it is possible to arrange this by choosing $\omega_{\Sigma_\gg}$ such that
$\left[ \omega_{\Sigma_\gg}/2\pi\right]$ is  $2(\gg-1)$ times a generator of $H^2({\Sigma_\gg},\bbz)$.

By rescaling, $Scal_{\Sigma_\gg}$ could take any negative real value and we will now consider \eqref{weightedscal} in the case where
$Scal_{\Sigma_\gg}=k\in \bbq\cap(-\infty,0)$ and
the K\"ahler $g_{\bbc\bbp^1}$ metric on $\bbc\bbp^1$ is given by the simplified toric set-up:
\begin{equation}
g_{\bbc\bbp^1}=\frac {d\gz^2}
{H(\gz)}+H(\gz)d\phi^2,\quad
\omega_{\bbc\bbp^1} = d\gz \wedge d\phi,
\end{equation}
where $-1<\gz<1$, $0<\phi\leq 2\pi$, and
\begin{align}
(i)~~ H(\gz) > 0, \quad -1 < \gz <1,\quad
(ii) ~~H(\pm 1) = 0,\quad
(iii)~~ H'(\pm 1) = \mp 2.
\end{align}
We may think of $\gz:\bbc\bbp^1 \rightarrow [-1,1]$ as a moment map of a choice of a $S^1$ action on $\bbc\bbp^1$ and $\omega_{\bbc\bbp^1}$.
(It is the same as $x_1$ in the proof above with $n=1$.)
The scalar curvature of $g_{\bbc\bbp^1}$ is given by 
$Scal=-H''(\gz)$  and for $f=f(\gz)$ we have that
$\Delta_{g_{\bbc\bbp^1}} f =-\frac{d}{d\gz}\left[H(\gz)f'(\gz)\right]$
and $|d\gz|^2_{g_{\bbc\bbp^1}}=H(\gz)$. 
If we choose $H(\gz)=1-\gz^2$, we are back to the case of Proposition \ref{manytwins}.
Note that $\left[\omega_{\bbc\bbp^1}/2\pi\right]$ and $\left[ \omega_{\Sigma_\gg}2\pi\right]$ are both rational
cohomology classes, so Remark \ref{polarization} applies here to the class $\left[\omega_{\Sigma_\gg}+\omega_{\bbc\bbp^1}\right]$
of the product K\"ahler form on $\Sigma\times \bbc\bbp^1$.
By considering different value of $Scal_{\Sigma_\gg}=k\in \bbq\cap(-\infty,0)$, we are implicitly considering all the possible polarization choices for 
${\Sigma_\gg}\times \bbc\bbp^1$. 

We are now ready to write up \eqref{weightedscal} for $g=g_{\Sigma_\gg}+g_{\bbc\bbp^1}$, $p=1+1+2=4$, and $f= c\gz+1$ where $-1<c<1$ and $\gz$ is the pullback from $\bbc\bbp^1$ to ${\Sigma_\gg}\times \bbc\bbp^1$:
\begin{equation}
Scal_{f,4}(g)= (1+c\gz)^2 \left(k-H''(\gz)\right)  +6c (1+c\gz)H'(\gz) - 12c^2H(\gz).
\end{equation}
As expected, when $k=-2$ and $H(\gz)=1-\gz^2$, $Scal_{f,4}(g)$ is a linear function in $\gz$ for any choice of $c\in (-1,1)$.

In general, we are looking for solutions to 
$$ (1+c\gz)^2 \left(k-H''(\gz)\right)  +6c (1+c\gz)H'(\gz) - 12c^2H(\gz)=A\gz+B$$
where $A,B$ are real constants.
This can be rewritten as
$$-(1+c\gz)^2 H''(\gz) +6c (1+c\gz)H'(\gz) - 12c^2H(\gz)=-k(1+c\gz)^2+A\gz+B$$
or
$$\frac{d^2}{d\gz^2}\left[\frac{H(\gz)}{(c\gz+1)^3}\right]=\left(k(1+c\gz)^2-A\gz-B\right)(c\gz+1)^{-5},$$
which, due to the endpoint conditions on $H(\gz)$, determines ($A$, $B$ and) $H(\gz)$ in terms of $k$, and $c$. 
We have
$$A=\frac{6 c \left(c^2 k-k+4\right)}{c^2-3},\quad B = \frac{3 \left(c^4 k-2 c^4+12 c^2-k-2\right)}{c^2-3}$$
and
$$H(\gz)=H_{k,c}(\gz)=\frac{(1-\gz^2) \left(4 \left(3-c^2\right)+(k+2) c^2(1- \gz^2)\right)}{4 \left(3-c^2\right)}$$
which satisfies the required $H(\gz) > 0$ for $-1 < \gz <1$ if and only if $12-(2-k)c^2>0$.
We assume this condition (which is non-trivial when $k<-10$) from now on.

Note that $A\gz+B$ is only a constant multiple of $c\gz+1$ (meaning the  Reeb vector field associated to $c$ is cscS) when $c=0$ and $H(\gz)=1-\gz^2$.

It is easy to see that $H_{k,c_1}=H_{k,c_2}$ if and only if $c_2=-c_1$ or $k=-2$. Thus, we either have that everything is twinning (when $k=-2$ as in Proposition \ref{manytwins}) or the twins appear 
as genuine pairs $\pm c$ (when $k\neq -2$) which are related by the Weyl group.

\end{example}

\subsection{Other weights}\label{otherweights}

Note that beyond the Sasakian realm, Proposition \ref{manytwins} has a generalization that can be proved in the same manner as the proof of Proposition \ref{manytwins}:

\begin{proposition}\label{generalmanytwins}
Let $p \in \bbr$,
let $(N_1, g_1, \omega_1)$ be a compact K\"ahler manifold of complex dimension $d$ with constant scalar curvature $Scal_1 =\frac{2(2-(p-n))((p-n)-1)}{n+1}$, and
let $(N_2, g_2, \omega_2)=(\bbc\bbp^n, g_{FS}, \omega_{FS})$ denote the complex projective space of complex dimension $n$ with the standard Fubini-Study metric of constant scalar curvature 
$Scal_{FS}=2n$.
Let $f$ be the pullback to $N_1\times \bbc\bbp^n$ of any positive smooth $\bbt^n$-invariant Killing potential $f$ on $(\bbc\bbp^n, g_{FS}, \omega_{FS})$. 
Then $(N_1\times \bbc\bbp^n, g_1+g_{FS},\omega_1+\omega_{FS})$
is an $(f,p)$-extremal K\"ahler structure.
\end{proposition}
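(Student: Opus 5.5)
The plan is to repeat the toric computation from the proof of Proposition \ref{manytwins} verbatim, keeping $p$ as a free parameter instead of setting $p=d+n+2$. The only place where $p$ and $Scal_1$ interact is the coefficient of $f^2$. We therefore expect the hypothesis $Scal_1=\frac{2(2-(p-n))((p-n)-1)}{n+1}$ to be exactly what kills that coefficient, leaving an expression that is affine in $x$, i.e.\ a Killing potential.

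\textbf{Setup.} We use the same toric data for $(\bbc\bbp^n,g_{FS},\omega_{FS})$: the simplex with $\ell_0,\dots,\ell_n$, the matrix ${\bf H}$, and the identities \eqref{eqScaltoric}, \eqref{eqLaptoric}, \eqref{eqNormgradtoric}. We write $f=\sum_i v_ix_i+\lambda$ and $m=\sum_i v_i$, so that $\sum_i v_i\ell_i=f+m-\lambda$. By the product facts listed in Section \ref{transition},
\[
Scal_g=Scal_1+2n,\qquad \Delta_g f=\Delta_{g_{FS}}f=2(f-\lambda),
\]
the latter from \eqref{eqLaptoric}. From \eqref{eqNormgradtoric},
\[
|df|^2_g=2\sum_i v_i^2\ell_i-\frac{2}{n+1}\Big(\sum_i v_i\ell_i\Big)^2=2\sum_i v_i^2(1+x_i)-\frac{2}{n+1}(f+m-\lambda)^2 .
\]

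\textbf{Substitution.} Inserting these into \eqref{weightedscal} gives
\[
Scal_{f,p}(g)=(Scal_1+2n)f^2-4(p-1)f(f-\lambda)+\frac{2p(p-1)}{n+1}(f+m-\lambda)^2-2p(p-1)\sum_i v_i^2(1+x_i).
\]
Apart from the $f^2$ terms, every term is either a constant, a multiple of $f$, or an affine function of $x$. Hence all of these are affine in $x$.

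\textbf{The key step.} It remains to check that the total coefficient of $f^2$ vanishes:
\[
Scal_1+2n-4(p-1)+\frac{2p(p-1)}{n+1}=0 .
\]
Setting $q=p-n$ and multiplying through by $(n+1)/2$, this condition becomes
\[
\tfrac{n+1}{2}\,Scal_1 \;=\; 2(p-1)(n+1)-n(n+1)-p(p-1).
\]
Expanding the right-hand side with $p=q+n$, the $n^2$, $qn$ and $n$ terms cancel, leaving $-q^2+3q-2=(2-q)(q-1)$. This is exactly $\tfrac{n+1}{2}Scal_1$ under the hypothesis. As a sanity check, for $p=d+n+2$ (so $q=d+2$) the hypothesis reduces to $Scal_1=-2d(d+1)/(n+1)$, recovering Proposition \ref{manytwins}.

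\textbf{Conclusion and main obstacle.} With the $f^2$ coefficient gone, $Scal_{f,p}(g)$ is affine in the moment coordinates $x$, hence a Killing potential, so the product is $(f,p)$-extremal. The main (and essentially only) obstacle is this cancellation computation; everything else is inherited unchanged from the proof of Proposition \ref{manytwins}. Note that $d$ plays no role in the argument beyond entering $Scal_g$ through $Scal_1$.
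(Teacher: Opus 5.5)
Your proposal is correct and follows exactly the paper's route, which is to repeat the toric computation of Proposition \ref{manytwins} with $p$ left free. Your expansion in $q=p-n$ correctly shows that the hypothesis on $Scal_1$ is precisely what makes the $f^2$ coefficient $Scal_1+2n-4(p-1)+\frac{2p(p-1)}{n+1}$ vanish, leaving $Scal_{f,p}(g)$ affine in $x$.
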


The expression for $Scal_1$ as a function of $(p-n)$ is a concave down parabola with a maximum value of $\frac{1}{2(n+1)}$.
In particular, for $p=n-d+1$ and $p=d+n+2$ we have the same value $Scal_1=\frac{-2d(d+1)}{n+1}$. Thus Proposition \ref{manytwins} has an embellished version:
\begin{proposition}\label{manytwinsbonus}
Let $(N_1, g_1, \omega_1)$ be a compact K\"ahler manifold of complex dimension $d$ with constant scalar curvature $Scal_1 =\frac{-2d(d+1)}{n+1}$ and
let $(N_2, g_2, \omega_2)=(\bbc\bbp^n, g_{FS}, \omega_{FS})$ denote the complex projective space of complex dimension $n$ with the standard Fubini-Study metric of constant scalar curvature 
$Scal_{FS}=2n$.
Let $f$ be the pullback to $N_1\times \bbc\bbp^n$ of any positive smooth $\bbt^n$-invariant Killing potential $f$ on $(\bbc\bbp^n, g_{FS}, \omega_{FS})$. 
Then the K\"ahler structure $(N_1\times \bbc\bbp^n, g_1+g_{FS},\omega_1+\omega_{FS})$
is $(f,n-d+1)$-extremal  as well as
$(f,d+n+2)$-extremal.
\end{proposition}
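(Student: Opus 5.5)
Proposition \ref{manytwinsbonus} follows from Proposition \ref{generalmanytwins} applied twice, so the plan is to prove the general statement with the toric computation from the proof of Proposition \ref{manytwins}, and then check that the two weights give the same curvature constant. I will use the same Delzant simplex, the Abreu symplectic potential and the inverse Hessian ${\bf H}$. Then \eqref{eqScaltoric}, \eqref{eqLaptoric} and \eqref{eqNormgradtoric} give $Scal(g_{FS})=2n$, $\Delta_{g_{FS}}x_k=2x_k$, and $|df|^2_{g_{FS}}=\sum_{i,j}H_{ij}v_iv_j$ for $f=\langle v,x\rangle+\lambda$. Because $g=g_1+g_{FS}$ is a product and $f$ is pulled back from $\bbc\bbp^n$, the elementary facts recalled in Section \ref{transition} give $Scal_g=Scal_1+2n$, $\Delta_g f=2(f-\lambda)$ and $|df|_g^2=|df|^2_{g_{FS}}$.

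Substituting into \eqref{weightedscal} with a general weight $p$ gives
$$Scal_{f,p}(g)=(Scal_1+2n)f^2-4(p-1)f(f-\lambda)-p(p-1)\sum_{i,j}\Bigl(2\delta_{ij}\ell_i-\tfrac{2\ell_i\ell_j}{n+1}\Bigr)v_iv_j .$$
Next I rewrite $\sum_i v_i\ell_i=f+m-\lambda$ with $m=\sum v_i$, exactly as in \eqref{eqfscaltoric2}. The last term then becomes
$$\frac{2p(p-1)}{n+1}(f+m-\lambda)^2-2p(p-1)\sum_i v_i^2(1+x_i).$$
Every term except the $f^2$ term is affine in $x$, so $Scal_{f,p}(g)$ is a Killing potential precisely when the $f^2$ coefficient vanishes. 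That coefficient is
$$Scal_1+2n-4(p-1)+\frac{2p(p-1)}{n+1}=0,$$
which solves to
$$Scal_1=\frac{2\bigl((n+1)(2p-2-n)-p(p-1)\bigr)}{n+1}.$$
The one genuinely delicate step is the algebra showing this equals $\frac{2(2-(p-n))((p-n)-1)}{n+1}$. I will substitute $q=p-n$ and expand both numerators as polynomials in $q$:
$$(n+1)(2q+n-2)-(q+n)(q+n-1)=-q^2+3q-2=(2-q)(q-1).$$
This identity is the main point to verify carefully, and it proves Proposition \ref{generalmanytwins}.

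For Proposition \ref{manytwinsbonus}, set $q=p-n$. For $p=d+n+2$ we get $q=d+2$ and $(2-q)(q-1)=-d(d+1)$. For $p=n-d+1$ we get $q=1-d$ and $(2-q)(q-1)=(1+d)(-d)=-d(d+1)$. Both weights therefore require $Scal_1=\frac{-2d(d+1)}{n+1}$, which is the hypothesis. This is just the symmetry $q\mapsto 3-q$ of the parabola. Applying Proposition \ref{generalmanytwins} with each of the two values of $p$ gives both extremality statements.
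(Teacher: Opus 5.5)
Your proposal is correct and follows the paper's route: redo the toric computation from the proof of Proposition \ref{manytwins} with a general weight $p$ to obtain Proposition \ref{generalmanytwins}, then observe that $p=d+n+2$ and $p=n-d+1$ give the same value $Scal_1=\frac{-2d(d+1)}{n+1}$, which is the symmetry $q\mapsto 3-q$ of the parabola $(2-q)(q-1)$ in $q=p-n$. Your algebra checks out, and you only need the ``if'' direction of your ``precisely when'' claim (the converse fails trivially when $f$ is constant).
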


\begin{remark}
For $p=n+2$ and $p=n+1$ in Proposition \ref{generalmanytwins} we have $Scal_1=0$.
We can formally ignore the $(N_1, g_1, \omega_1)$ piece (by setting $d=0$ in Proposition \ref{manytwinsbonus}) and consider
$p=n+2$ as the $\bbc\bbp^n$ case discussed right below Definition \ref{twinsSasaki}.  
This then produces the observation that the Fubini-Study K\"ahler structure $(\bbc\bbp^n, g_{FS}, \omega_{FS})$ is
 $(f,n+1)$-extremal as well as  $(f,n+2)$-extremal
for any choice of a positive smooth Killing potential $f$ on $(\bbc\bbp^n, g_{FS}, \omega_{FS})$.

This ``two-weights'' observation is analogues, but not identical, to an observation made in \cite{ApJuLa21} where it is discovered that on a smooth Fano manifold $X$, with a non-trivial compact torus $\bbt \subset Aut(X)$, any non-trivial $v$-soliton
 in $2\pi c_1(X)$ would automatically be $(v,w)$-cscK (in the sense of Definition 4 of \cite{Lah19}) as well as $(\tilde{v},\tilde{w})$-cscK, where the weights $(\tilde{v},\tilde{w})$ are different from $(v,w)$.
 
Note that $(v,w)$-extremal as well as  $(v,w)$-cscK in the sense of Lahdili (See the introduction of  \cite{Lah19} as well as Section 1.1 of \cite{ApJuLa21}) is much more general than the notion of $(f,p)$-extremal (defined in \cite{ApCa18}) that we are using in the present paper. The latter is a special case of the former where $v=f^{-(p-1)}$ and $w=f^{-(p+1)}$.
\end{remark}

\newcommand{\etalchar}[1]{$^{#1}$}
\def\cprime{$'$} \def\cprime{$'$} \def\cprime{$'$} \def\cprime{$'$}
  \def\cprime{$'$} \def\cprime{$'$} \def\cprime{$'$} \def\cdprime{$''$}
  \def\cprime{$'$} \def\cprime{$'$} \def\cprime{$'$} \def\cprime{$'$}
\providecommand{\bysame}{\leavevmode\hbox to3em{\hrulefill}\thinspace}
\providecommand{\MR}{\relax\ifhmode\unskip\space\fi MR }
\providecommand{\MRhref}[2]{%
  \href{http://www.ams.org/mathscinet-getitem?mr=#1}{#2}
}
\providecommand{\href}[2]{#2}

\end{document}